\newtheorem{theorem}{Theorem}[section]
\newtheorem{lemma}{Lemma}[section]
\newcommand{\R}{\mathbb{R}}
\newcommand{\E}{\mathbb{E}}
\newcommand{\diag}{\mathrm{diag}\,}
\newcommand{\Diag}{\mathrm{Diag}\,}
\newcommand{\rank}{\mathrm{rank}\,}
\newcommand{\sign}{\mathrm{sign}\,}
\newcommand{\supp}{\mathrm{supp}\,}
\newcommand{\st}{\mathrm{s.t.}\;}
\newcommand{\tr}{\mathrm{Tr}\,}
\newcommand{\e}{\mathbf{e}}
\newcommand{\n}{\mathbf{n}}
\renewcommand{\u}{\mathbf{u}}
\newcommand{\bv}{\mathbf{v}}
\newcommand{\x}{\mathbf{x}}
\newcommand{\y}{\mathbf{y}}
\newcommand{\id}{\mathbf{1}}
\newcommand{\abs}[1]{\ensuremath{\left| #1  \right| }}
\newcommand{\mat}[1] {\ensuremath{ \left(\begin{array} #1 \end{array} \right)}} %matrix: 1 includes dimensions, alignment, and contents.
\newcommand{\branchdef}[1] {\ensuremath{ \left\{\begin{array}{rl} #1 \end{array} \right. }} %conditional def of variable..
\newcommand{\inp}[1] {\ensuremath{\langle #1 \rangle}} % innerproduct of 1 and 2.
\newcommand{\bra}[1] {\ensuremath{ \left\{ #1\right \}}} % [1].
\newcommand{\bbra}[1]{\ensuremath{ \big( #1 \big) } } % big(1).
\newcommand{\Bbra}[1]{\ensuremath{ \Big( #1 \Big) } } % big(1).
\newcommand{\sbra}[1] {\ensuremath{ \left[ #1\right]}} % [1].
\newcommand{\rbra}[1]{\ensuremath{\left( #1 \right)}} % (1).
\newcommand{\ds}[1]{\displaystyle{#1}}
\newcommand{\whp}{with high probability}
\newcommand{\barX}{\bar\bv \bar\bv^T}
\newcommand{\bubv}{\u \bv^T}
\newcommand{\kbs}{$(k_1,k_2)$-subgraph}
\newcommand{\ra}{\rightarrow}
\newcommand{\qed}{\hfill\rule{2.1mm}{2.1mm}}
\begin{document}

%----------------------------------------------------------------------
% FRONT MATERIAL
%----------------------------------------------------------------------

%-------------------------------------------------------------------------------------------------------------------------------------
% Title and contact info.
%-------------------------------------------------------------------------------------------------------------------------------------
\title{Guaranteed Recovery of Planted Cliques and Dense Subgraphs by Convex Relaxation
\author{Brendan P.W.~Ames}
\thanks{University of Alabama,
Department of Mathematics, Box 870350, Tuscaloosa, AL, 35487-0350 +1-205-348-5155
bpames@ua.edu }
}
% \date{Received: \today/ Accepted: date}

\maketitle

\begin{abstract}
	We consider the problem of identifying the densest $k$-node subgraph in a given graph.
We write this problem as an instance of rank-constrained cardinality minimization and
then relax using the nuclear and $\ell_1$ norms.
Although the original combinatorial problem is NP-hard, we show that the densest $k$-subgraph can be recovered
from the solution of our convex relaxation for certain program inputs.
In particular, we establish exact recovery in the case that the input graph contains a single planted clique plus noise
in the form of corrupted adjacency relationships.
We consider two constructions for this noise. In the first, noise is introduced by an adversary deterministically deleting edges
within the planted clique and placing diversionary edges.
In the second, these edge corruptions are performed at random.
Analogous recovery guarantees for identifying the densest subgraph of fixed size
in a bipartite graph are also established, and
results of numerical simulations for randomly generated graphs are included to demonstrate the efficacy of our algorithm.
	
%	\keywords{Planted clique problem \and Densest subgraph problem
%	\and Nuclear norm minimization 
%		\and $\ell_1$ norm minimization \and Rank \and Sparsity}
%	\subclass{90C25 \and 90C59 \and 65K05 \and 68Q25}
\end{abstract}	

%----------------------------------------------------------------------
% MAIN BODY
%----------------------------------------------------------------------

%% Spacing.
\allowdisplaybreaks

\section{Introduction}
%				
% Intro/apps.
We consider the {\it densest $k$-subgraph problem}. Given input graph $G$ and integer $k$, 
the densest $k$-subgraph problem seeks the $k$-node subgraph of $G$ with maximum number of edges.
The identification and analysis of dense subgraphs plays a significant role	
in a wide range of applications, including information retrieval, pattern recognition, computational biology, and image processing.
For example, a group of densely connected nodes may correspond to a community of users in a social network
or cluster of similar items in a given data set.
%% Complexity/intractability.
Unfortunately, the problem of finding a densest subgraph of given size is known to be both NP-hard (see \cite{feige2001dense}) and  hard to approximate (see \cite{feige2002relations,khot2004ruling,alon2011inapproximability}).

%%
%% Contribution.
Our results can be thought of as a generalization to the densest $k$-subgraph problem of those in \cite{ames2011nuclear} for the maximum clique problem.
% Relation to AV paper.
In \cite{ames2011nuclear}, Ames and Vavasis establish that the maximum clique of a given graph can be recovered from the optimal solution of a particular convex program for certain classes of input graphs. 
Specifically, Ames and Vavasis show that the maximum clique in a graph consisting of a single large clique, called a planted clique, and a moderate amount of diversionary edges and nodes
 can be identified from the minimum nuclear norm solution of a particular system of linear inequalities. 
These linear constraints restrict all feasible solutions to be adjacency matrices of subgraphs with a desired number of nodes, say $k$, while the objective acts as a surrogate for the rank of the feasible solution;
a rank-one solution would correspond to a $k$-clique in the input graph.
We establish analogous recovery guarantees for a convex relaxation of the planted clique problem that is robust to noise in the form of both diversionary edge additions and {\it deletions} within the planted complete subgraph.
In particular, we modify the relaxation of \cite{ames2011nuclear} by adding an $\ell_1$ norm penalty to measure the error between the rank-one approximation of the adjacency matrix of each $k$-subgraph and its true adjacency matrix.

% Relation to RPCA.
This relaxation technique, and its  accompanying recovery guarantee, mirrors that of several recent papers regarding convex optimization approaches for robust principal component analysis  \cite{chandrasekaran2009rank,candes2009robust,chen2011low}
and graph clustering \cite{oymak2011finding,jalali2011clustering,chen2012clustering}. 
Each of these papers establishes that a desired matrix or graph structure, represented as the sum of a low-rank and sparse matrix,
can be recovered from the optimal solution of some convex program under certain assumptions on the input matrix or graph.
% Relation to Chen et al.
In particular, our analysis and results are closely related to those of \cite{chen2012clustering}.
In \cite{chen2012clustering}, Chen et al.~consider a convex optimization heuristic for identifying clusters in data, represented as collections of relatively dense subgraphs in a sparse graph, and provide
bounds on the size and density of these subgraphs ensuring exact recovery using this method.
We establish analogous guarantees for identifying a single dense subgraph when the cardinality of this subgraph is known a priori.
For example, we will show that a planted clique of cardinality as small as $\Omega(N^{1/3})$  can be recovered in the presence of sparse random noise, where $N$ is the number of nodes in the input graph, significantly less than the
bound $\Omega(N^{1/2})$ established in \cite{ames2011nuclear}.

%% Outline of the rest of the paper.
The remainder of the paper is structured as follows.
% DKS Section.
We present our relaxation for the densest $k$-subgraph problem and state our theoretical recovery guarantees
in Section~\ref{sec: dks prob}.
In particular, we will show that the densest $k$-subgraph can be recovered from the optimal solution of our convex relaxation
in the case that the input graph $G=(V,E)$ consists of a planted $k$-clique $V^*$ that has been corrupted by the noise in the form of diversionary edge additions and deletions, as well as diversionary nodes.
We consider two cases. In the first, noise is introduced deterministically by an adversary adding diversionary edges and deleting edges between nodes within the planted clique.
In the second, these edge deletions and additions are performed at random.
% DKB section.
We present an analogous relaxation for identifying the densest bipartite subgraph of given size in a bipartite graph in Section~\ref{sec: dkb prob}.
% Proof.
A proof of the recovery guarantee for the densest $k$-subgraph problem in the random noise case comprises Section~\ref{sec: proof}; the proofs of the remaining theoretical guarantees are similar and 
are included as supplemental material.
% Expts.
We conclude with simulation results for synthetic data sets in Section~\ref{sec: expts}.

\section{The Densest $k$-Subgraph Problem}
\label{sec: dks prob}
% Density def.
The \emph{density} of a graph $G = (V, E)$ is defined to be the average number of edges incident at a vertex or average degree of $G$:
$
	d(G) = |E|/|V|. %\frac{|E'|}{|V'|}.
$
% DKS problem.
The \emph{densest $k$-subgraph problem} seeks a $k$-node subgraph of $G$ 
of maximum average degree or density:
\begin{equation}	\label{eq: dks problem}
	\max \{d(H): H \subseteq G, \, |V(H)| = k \}.
\end{equation}
% Complexity.
Although the problem of finding a subgraph with maximum average degree is polynomially solvable (see \cite[Chapter 4]{lawlercombinatorial}),
the densest $k$-subgraph problem is NP-hard. Indeed, 
if a graph $G$ has a clique of size $k$, this clique would be the densest $k$-subgraph of $G$.
Thus, any instance of the maximum clique problem, known to be NP-hard \cite{karp1972reducibility},
is equivalent to an instance of the densest $k$-subgraph problem.
Moreover, the densest $k$-subgraph problem is hard to approximate;
specifically, it has been shown that the densest $k$-subgraph problem does not admit a polynomial-time approximation scheme under various complexity theoretic assumptions \cite{feige2002relations,khot2004ruling,alon2011inapproximability}.
% Approach.
Due to, and in spite of, this intractability of the densest $k$-subgraph problem, we consider relaxation of \eqref{eq: dks problem} to a convex program.
Although we do not expect this relaxation to provide a good approximation of the densest $k$-subgraph for every input graph, we will
establish that the densest $k$-subgraph can be recovered from the optimal solution of this convex relaxation for certain classes of input graphs.
In particular, we will show that our relaxation is exact for graphs containing a single dense subgraph obscured by noise in the form of diversionary nodes and edges.

%----------------------------------------------------------------
% Relaxation using RPCA.
%----------------------------------------------------------------
Our relaxation is based on
the observation that the adjacency matrix of a dense subgraph is well-approximated by the rank-one adjacency matrix of the complete graph on the same node set.
% Construction of the rank-one matrix.
Let $V' \subseteq V$ be a subset of $k$ nodes of the graph $G = (V,E)$ and
let $\bar \bv$ be its characteristic vector. That is, for all $i \in V$, $\bar \bv_i  = 1$ if $i \in V'$ and is equal to $0$ otherwise.
The vector $\bar\bv$ defines a rank-one matrix $\bar X$ by the outer product of $\bar\bv$ with itself: $\bar X = \bar\bv \bar\bv^T$.
% Construction of Y.
Moreover, if $V'$ is a clique of $G$ then the nonzero entries of $\bar X$ correspond to the $k\times k$ all-ones block of the 
perturbed adjacency matrix $\tilde A_G := A_G + I$ of $G$ indexed by $V'  \times V'$;
% R1.1.
here $A_G \in \R^{V\times V}$ denotes the \emph{adjacency matrix} of the graph $G$, defined by
$$
	[A_G]_{ij} := \branchdef{ 1, & \mbox{if $ij \in E$,} \\ 0, &\mbox{otherwise,} }
$$
and $I \in \R^{V\times V}$ denotes the identity matrix with rows and columns indexed by $V$.
% end R1.1.
If $V'$ is not a clique of $G$, then the entries of $\tilde A_G (V', V')$ indexed by nonadjacent nodes are
equal to $0$. Let $\bar Y \in \R^{V\times V}$ be the matrix defined by
\begin{equation}	\label{eq: proposed Y}
	\bar Y_{ij} := \branchdef{ 	-\bar X_{ij},  &\mbox{if $ij \in \tilde E$}, \\
							0, &\mbox{otherwise,} 
			}
\end{equation}
where $\tilde E$ is the complement of the edge-set of $G$ given by $\tilde E:= (V\times V) - E - \{uu: u \in V\}$.
That is, $\bar Y = - P_{\tilde E}(\bar X)$, where $P_{\tilde E}$ is the orthogonal projection onto the set of matrices with support contained $\tilde E$ defined by
\begin{equation} \label{eq: proj def}
	[ P_{\tilde E}(M)]_{ij} := \branchdef{ M_{ij}, &\mbox{if } (i,j) \in \tilde E, \\ 0, &\mbox{otherwise,} } 
\end{equation}
for all $M \in \R^{V\times V}$.
The matrix $\bar Y$ can be thought of as a correction for the entries of $\bar X$ indexed by nonedges of $G$.
Indeed, $\bar X + \bar Y$ is exactly the adjacency matrix of the subgraph of $G$ induced by $V'$, with
ones in diagonal entries indicating loops at each $v \in V'$:
$$
 	\bar X_{ij} + \bar Y_{ij} = \branchdef{ 	1, & \mbox{if  } ij \in (E \cup \{ uu: u \in V\}) \cap (V' \times V'), \\
 									0, &\mbox{otherwise.} 	
 								}
$$ 									
 %% Relation to l0
 Moreover,   the density of $G(V')$ is equal to
$$
	d(G(V')) = \frac{1}{2k} \bbra{ k(k-1) - \| \bar Y\|_0 },
$$
by the fact that the number of nonzero entries in $\bar Y$ is exactly twice the number of
 nonadjacent pairs of nodes in $G(V')$,
Here $\|\bar Y \|_0$ denotes the so-called {\it $\ell_0$ norm} of $\bar Y$, defined as the cardinality of the support of $\bar Y$.
Maximizing the density of $H$ over all $k$-node subgraphs of $G$ is equivalent to 
minimizing $\|Y\|_0$ over all $(X, Y)$ as constructed above. 
%%% RPCA statement.
Consequently, \eqref{eq: dks problem} is equivalent to
\begin{equation}	\label{eq: dks min l0}
	\min_{X, Y\in\Sigma^V} \left\{ \|Y\|_0: \rank (X) = 1, \; \e^T X \e = k^2, \; X_{ij} + Y_{ij} = 0  \; \forall \, ij \in \tilde E, \;X  \in \{0,1\}^{V \times V} \right\}, 
%	\begin{array}{rl}
%		\min 	& \|Y\|_0 \\
%		\st 		& \rank X = 1 \\
%				& \e^T X \e = K^2 \\
%				& X_{ij} + Y_{ij} = 0 \mbox{ if } ij \in \tilde E \\
%				& X \in \{0,1\}^{V\times V}
%	\end{array}
\end{equation}
where $\e$ is the all-ones vector in $\R^V$,
$\Sigma^V$ denotes the cone of $|V|\times |V|$ symmetric matrices with rows and columns indexed by $V$.
Indeed, the constraints $\rank(X) = 1,$ $\e^T  X \e = k^2,$ and $X \in \Sigma^V \cap \{0,1\}^{V\times V}$ force any feasible $X$ to be a rank-one symmetric binary matrix with exactly $k^2$ nonzero entries,
while the requirement that $X_{ij} + Y_{ij} = 0$ if $ij \in \tilde E$ ensures that every entry of $X$ indexed by a nonadjacent pair of nodes is corrected by $Y$.
Moving the constraint $\rank(X) = 1$ to the objective as a penalty term yields the nonconvex program
\begin{equation}	\label{eq: dks RPCA}
	\min_{X, Y \in \Sigma^V} \left\{ \rank(X) + \gamma\|Y\|_0: \; \e^T X \e = k^2, \; X_{ij} + Y_{ij} = 0  \; \forall \, ij \in \tilde E, \;X \in \{0,1\}^{V\times V} \right\}.
%	\begin{array}{rl}
%		\min 	& \rank(X) + \gamma \|Y\|_0 \\
%		\st 		& \e^T X \e = K^2 \\
%				& X_{ij} + Y_{ij} = 0 \mbox{ if } ij \in \tilde E \\
%				& X \in \{0,1\}^{V\times V},
%	\end{array}
\end{equation}
Here $\gamma > 0$ is a regularization parameter to be chosen later.
% Relaxation.
We relax \eqref{eq: dks RPCA} to the convex problem
\begin{equation} 	\label{eq: dks relaxation}
	\min \left\{ \|X\|_*+ \gamma \|Y\|_1 : \; \e^T X \e = k^2, \; X_{ij} + Y_{ij} = 0  \; \forall \, ij \in \tilde E, \; X \in [0,1]^{V\times V} \right\}
%		\begin{array}{rl}
%		\min 	& \|X\|_*+ \gamma \|Y\|_1 \\
%		\st 		& \e^T X \e = K^2 \\
%				& X_{ij} + Y_{ij} = 0 \mbox{ if } ij \in \tilde E \\
%				& X \in [0,1]^{V\times V}
%	\end{array}
\end{equation}
by replacing $\rank$ and $\|\cdot\|_0$ with their convex envelopes, the nuclear norm $\|\cdot\|_*$ and the 
$\ell_1$ norm $\|\cdot\|_1$, relaxing the binary constraints on the entries of $X$ to the corresponding box constraints, and
omitting the symmetry constraints on $X$ and $Y$.
% Ref 2 1b.
%Ignoring the symmetry constraints is not necessary to obtain a tractable relaxation, but does further eliminate $O(N^2)$ linear constraints and allows a simpler extension of \eqref{eq: dks relaxation} to the bipartite problem considered in Section~\ref{sec: dkb prob}.
Although ignoring the symmetry constraints is not necessary to obtain a tractable relaxation,
our proofs of the recovery guarantees stated in Theorems~\ref{thm: dks adv guarantee}~and~\ref{thm: dks recovery} suggest that the Lagrange multipliers corresponding to the symmetry constraints in \eqref{eq: dks RPCA} may be chosen to be equal to $0$;
we omit the symmetry constraints to eliminate these $O(N^2)$ potentially redundant linear constraints and allow a simpler extension of \eqref{eq: dks relaxation} to the bipartite problem considered in Section~\ref{sec: dkb prob}.
% Ref 2 1b.
Here $\|Y\|_1$ denotes the $\ell_1$ norm of the vectorization of $Y$:
$
	\|Y\|_1 := \sum_{i\in V} \sum_{j\in V} |Y_{ij}|.
$
Note that $\|Y\|_0 = \|Y\|_1$ for the proposed choice of $Y$ given by \eqref{eq: proposed Y}, although this equality clearly does not hold in general.

Our relaxation mirrors that proposed by Chandrasekaran et al.~\cite{chandrasekaran2009rank} for  robust principal component analysis.
Given matrix $M\in \R^{m\times n}$, the Robust PCA problem seeks a decomposition of the form $M = L + S$ where $L \in \R^{m\times n}$ has low rank and
$S \in \R^{m\times n}$ is sparse.
In \cite{chandrasekaran2009rank}, Chandrasekaran et al.~establish that such a decomposition can be obtained by solving
the convex problem
$
	\min \{ \|L\|_* + \|S\|_1: M = L+ S \}
$
under certain assumptions on the input matrix $M$.
Several recent papers \cite{candes2009robust,doan2010finding,chen2011low,oymak2011finding,jalali2011clustering,chen2012clustering} have extended this result to obtain conditions on the input matrix $M$
ensuring perfect decomposition under partial observation of $M$ and other linear constraints.
%In the Robust PCA problem, one wants to decompose a given matrix as the sum of a low-rank matrix and a sparse matrix.
%Several recent papers \cite{chandrasekaran2009rank, candes, steve and vihn, ut, oh} have established that this decomposition can be recovered from the optimal solution of the convex program
%$$
%	\min \{ \|X\|_* + \gamma \|Y\|_1: \A(X,Y) = \b \}
%$$
%under certain structural assumptions on the regularization parameter $\gamma$ and the linear system defined by $\A$ and  $\b$.
% R2 4.
These results can be thought of as generalizations of the results of \cite{recht2010guaranteed}, which established conditions under which a low-rank matrix can recovered from  linear samples of its entries and an appropriate nuclear norm minimization; this result, in turn, generalizes those of  \cite{Gilbert,Donoho,CandesRombergTao} establishing
that the sparsest solution of some sets of linear equations can be recovered using relaxation of vector cardinality to
the vector $\ell_1$ norm.
These recovery guarantees rely on the fact that the linear sampling operators sample roughly equal amounts of information from each entry of the matrix or vector, often stated in the form of a restricted isometry or incoherence property.
Unfortunately, if the graph $G$ contains a large clique or dense subgraph then 
the linear adjacency constraints in \eqref{eq: dks relaxation} will ignore the block of $A_G$ indexed by this dense subgraph;
therefore, we can always construct graphs where \eqref{eq: dks relaxation} fails to satisfy the restricted isometry property.
Although these recovery guarantees do not translate immediately to our formulation for the densest $k$-subgraph problem and its relaxation, we will establish analogous conditions ensuring exact recovery of the densest $k$-subgraph
of $G$ from \eqref{eq: dks relaxation} under certain conditions on $G$.
% R2 4.

%------------------------------------------------------------------------------------------------------------------
%% Recovery guarantees.
%------------------------------------------------------------------------------------------------------------------

We consider a planted case analysis of \eqref{eq: dks relaxation}.
Suppose that the input graph $G$ contains a single dense subgraph $H$, plus diversionary edges and nodes.
We are interested in  the tradeoff  between the density of $H$, the size $k$ of $H$, and the level of noise required to guarantee recovery of $H$ from the optimal solution of \eqref{eq: dks relaxation}.
In particular, we consider graphs $G = (V,E)$ constructed as follows.
We start by adding all edges between elements of some $k$-node subset $V^* \subseteq V$ to $E$.
That is, we create a $k$-clique $V^*$ by adding the edge set of the complete graph with vertex set $V^*$ to $G$.
We then corrupt this $k$-clique with noise in the form of deletions of edges within $V^*\times V^*$ and additions of potential edges in $(V\times V) - (V^*\times V^*)$.

We consider two cases. In the first, these additions and deletions are performed deterministically.
In the second, the adjacency of each vertex pair is corrupted independently at random.
In the absence of edge deletions, this is exactly the planted clique model considered in \cite{ames2011nuclear}.
In \cite{ames2011nuclear}, Ames and Vavasis provide conditions ensuring exact recovery of a planted clique from the optimal solution of the convex program
\begin{equation} \label{eq: AV prob}
	\min_X  \bra{ \|X \|_*: \e^T X \e \ge k^2, \; X_{ij} = 0 \; \forall \, ij \in \tilde E }.
\end{equation}	
The following theorem provides a recovery guarantee for the densest $k$-subgraph in the case of adversarial edge additions and deletions, analogous to that of \cite[Section~4.1]{ames2011nuclear}.

% Adversarial recovery theorem.
\begin{theorem}	\label{thm: dks adv guarantee}
	Let $V^*$ be a $k$-subset of nodes of the graph  $G = (V,E)$ and let $\bv$ be its characteristic vector.
	Suppose that $G$ contains at most $r$ edges not in $G(V^*)$ and ${G}(V^*)$ contains at least ${k \choose 2} - s$ edges, such that each vertex in $V^*$ is adjacent to at least $(1-\delta_1) k $ nodes in
	$V^*$ and each vertex in $V-V^*$ is adjacent to at most $\delta_2 k$ nodes in $V^*$
	for some $\delta_1, \delta_2 \in (0, 1)$ satisfying
	$
		2 \delta_1 + \delta_2 < 1.
	$
	Let $(X^*, Y^*) $ be the feasible solution for \eqref{eq: dks relaxation}
	where $X^* = \bv\bv^T$ and $Y^*$ is constructed according to \eqref{eq: proposed Y}.		
	Then there exist scalars $c_1, c_2 > 0$, depending only on $\delta_1$ and $\delta_2$, such that if  
	$s \le c_1 k^2$ and $r \le c_2 k^2$
	then 	$G(V^*)$ is the unique maximum density $k$-subgraph of $G$
	and $(X^*, Y^*) $ is the unique optimal solution of \eqref{eq: dks relaxation} for
	$
		\gamma =  2 \big((1 - 2 \delta_1 - \delta_2) k \big)^{-1}.
	$
\end{theorem}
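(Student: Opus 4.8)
The plan is to prove the statement in two stages: a combinatorial stage showing that $G(V^*)$ is the unique densest $k$-subgraph, and an optimization stage showing that $(X^*,Y^*)$ is the unique minimizer of \eqref{eq: dks relaxation}, the latter by exhibiting an explicit dual certificate for first-order optimality.

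For the combinatorial stage I would take an arbitrary $k$-subset $W\neq V^*$, set $t=|W\setminus V^*|\ge 1$, and bound $|E(G(W))|$ by splitting it into edges inside $W\cap V^*$, edges between $W\cap V^*$ and $W\setminus V^*$, and edges inside $W\setminus V^*$. The per-vertex hypotheses (at least $(1-\delta_1)k$ neighbours in $V^*$ for vertices of $V^*$, at most $\delta_2 k$ for the rest) give, for small $t$, a bound of the form $|E(G(W))|\le |E(G(V^*))|-t(1-\delta_1-\delta_2)k+\tfrac32 t(t-1)$, a strict loss because $2\delta_1+\delta_2<1$ forces $1-\delta_1-\delta_2>0$; for the remaining range of $t$ the hypothesis that $G$ has at most $r$ edges outside $G(V^*)$ makes the last two groups contribute at most $r$, so $|E(G(W))|\le\binom{k-t}{2}+r<\binom{k}{2}-s\le|E(G(V^*))|$ once $s\le c_1k^2$, $r\le c_2k^2$ with $c_1,c_2$ small in terms of $\delta_1,\delta_2$.

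For the optimization stage, first-order optimality of $(X^*,Y^*)$ reduces — since the entries $Y_{ij}$ with $ij\notin\tilde E$ are unconstrained and hence zero at optimality — to the existence of a scalar $\mu$ (multiplier of $\e^TX\e=k^2$), a matrix $Z$ supported on $\tilde E$ (multipliers of the coupling constraints), entrywise-nonnegative $\Lambda^{(0)},\Lambda^{(1)}$ supported on $\{X^*=0\}$ and $\{X^*=1\}$, and a matrix $W$ with $W\bv=W^T\bv=0$ and $\|W\|_2\le 1$, satisfying
\begin{equation*}
	\tfrac1k\bv\bv^T+W+Z+\mu\,\e\e^T+\Lambda^{(1)}-\Lambda^{(0)}=0,
\end{equation*}
where the subgradient of $\gamma\|\cdot\|_1$ at $Y^*$ forces $Z_{ij}=\gamma$ on the set $\Omega$ of non-edges inside $V^*$ (the support of $Y^*$) and $|Z_{ij}|\le\gamma$ on the rest of $\tilde E$. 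I would build this certificate block-wise against the partition $V=V^*\cup(V\setminus V^*)$. On the clique block one puts all of the available $\Lambda^{(1)}$-mass on the edge positions of $V^*$, which makes $W$ small there (Frobenius norm $O(\gamma\sqrt s)$) and pins $W$ on $\Omega$ at the constant $-\tfrac1k-\gamma-\mu$; this forces $\mu\le(-1-\gamma d_{\max})/k$, where $d_{\max}<\delta_1 k$ is the largest non-degree within $V^*$. On the remaining blocks $W$ is forced to be nonnegative of size $\Theta(1/k)$ on the at most $r$ external-edge positions and can be set near zero elsewhere, but nulling the row and column sums demanded by $W\bv=W^T\bv=0$ requires $W$ to be negative on some non-edge positions, which is possible only when $\mu>-\gamma$. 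The interval $\big(-\gamma,\,(-1-\gamma d_{\max})/k\big]$ for $\mu$ is nonempty precisely when $\gamma(k-d_{\max})>1$, and this is exactly what $\gamma=2\big((1-2\delta_1-\delta_2)k\big)^{-1}$ secures; this is where the hypothesis $2\delta_1+\delta_2<1$ and the prescribed value of $\gamma$ enter.

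The main obstacle is the verification $\|W\|_2<1$. I would write $W$ in $2\times2$ blocks and bound each: the clique block is $-\gamma(1-d_{\max}/k)$ times the adjacency matrix of the complement of $G$ on $V^*$ (spectral norm at most $\gamma\sqrt{2s}$) plus a piece of Frobenius norm $O(\gamma\sqrt s)$; the outside block is $|\mu|$ times the identity plus $|\mu|$ times the adjacency matrix of $G(V\setminus V^*)$ (spectral norm at most $|\mu|(1+\sqrt{2r})$); and the off-diagonal blocks are supported on the external-edge positions with entries $\Theta(1/k)$ together with small sum-nulling corrections (Frobenius norm $O(\sqrt r/k)$). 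Since $|\mu|=\Theta(1/k)$, $s\le c_1k^2$ and $r\le c_2k^2$, every term is $O(\sqrt{c_1}+\sqrt{c_2})$ up to factors depending only on $\delta_1,\delta_2$, so choosing $c_1,c_2$ small enough yields $\|W\|_2<1$, and taking the corrections so that $|Z_{ij}|<\gamma$ strictly off $\Omega$ and $\Lambda^{(0)},\Lambda^{(1)}$ strictly positive on their supports keeps the certificate strictly feasible. Uniqueness then follows the standard route: combining the subgradient inequalities for $\|\cdot\|_*$ and $\gamma\|\cdot\|_1$ with the certificate gives $\|X\|_*+\gamma\|Y\|_1\ge\|X^*\|_*+\gamma\|Y^*\|_1$ for every feasible $(X,Y)$, with equality only if the perturbation $D=X-X^*$ lies in the rank-one tangent space at $X^*$, vanishes on $\tilde E\setminus\Omega$, and vanishes wherever a box multiplier is strictly positive; with $\e^TD\e=0$ and $0\le X^*+D\le\e\e^T$ this forces $D=0$, and any residual ambiguity is excluded by the uniqueness of the densest $k$-subgraph from the first stage. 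Because the argument uses only the two degree conditions and the bounds on $r$ and $s$, the random-noise theorem follows by replacing the worst-case bounds with high-probability ones, which is presumably why the proof of this theorem is omitted.
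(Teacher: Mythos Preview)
Your strategy---construct a KKT certificate with $\|W\|<1$ and a strict $\ell_\infty$ bound on the $\tilde E$-multiplier, then bound $\|W\|$ blockwise---is exactly the paper's; the paper omits this proof but says it follows the template of Section~\ref{sec: proof}. Two points of execution differ and one has a gap. First, the paper parametrizes the upper-box multiplier as $M=\y\e^T+\e\y^T$ over \emph{all} of $V^*\times V^*$ (including the non-edge set $\Omega$), which turns $W\bv=W^T\bv=0$ into the single linear system \eqref{eq: y system} with the explicit solution \eqref{eq: y formula}; by putting $\Lambda^{(1)}$ only on edge positions you instead face a transportation problem (prescribed row \emph{and} column sums on a restricted support) that you do not actually solve, and the $O(\gamma\sqrt s)$ Frobenius bound you assert for the resulting edge correction is not obvious for an arbitrary feasible solution of that problem. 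Second, your interval for $\mu$ is too wide: the binding lower bound comes from the cross block, where nulling row $i\notin V^*$ with $n_i\le\delta_2 k$ neighbours in $V^*$ forces $|Z_{ij}|=|\mu|\,k/(k-n_i)\le\gamma$, i.e.\ $\mu\ge-\gamma(1-\delta_2)$ rather than merely $\mu>-\gamma$; this is how $\delta_2$ actually enters, and combining it with your upper bound and $d_{\max}\le\delta_1 k$ gives the requirement $\gamma k(1-\delta_1-\delta_2)\ge 1$, which the prescribed $\gamma$ satisfies with the factor $2$ to spare. Finally, your combinatorial stage is redundant: once $(X^*,Y^*)$ is the unique optimizer, any competing $k$-subset gives a feasible pair with the same $\|X\|_*=k$ and hence strictly larger $\|Y\|_1$, so $G(V^*)$ is automatically the unique densest $k$-subgraph---this is precisely the short argument embedded in the proof of Theorem~\ref{thm: KKT conds}.
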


% Explanation of constants.
In Theorem~\ref{thm: dks adv guarantee}, the constants $\delta_1$ and $c_1$ parametrize the density of the planted dense
subgraph $G(V^*)$, while $\delta_2$ and $c_2$ control the number of edges in $G$ outside of $G(V^*)$.
Specifically, $\delta_1$ denotes the minimum degree of a node in $G(V^*)$ and $\delta_2$ denotes the maximum number of
neighbours each node in $V-V^*$ may have in $V^*$. Theorem~\ref{thm: dks adv guarantee} states that if $G(V^*)$ is
sufficiently dense, then $G(V^*)$ is the densest $k$-subgraph of $G$
and can be recovered by solving the relaxation \eqref{eq: dks relaxation}; here, ``sufficiently dense" corresponds to $G(V^*)$
containing at least ${k \choose 2} - c_1 k^2$ edges and $G$ containing at most $c_2 k^2$ edges total.

% Discussion.
The bound on the number of adversarially added edges given by Theorem~\ref{thm: dks adv guarantee} matches that given in \cite[Section~4.1]{ames2011nuclear} up to constants.
Moreover, the noise bounds given by Theorem~\ref{thm: dks adv guarantee} are optimal in the following sense.
%For example, {\bf (figure out why optimal. Certainly are in the clique case)}. 
Adding $k$ edges from any node $v'$ outside $V^*$ to $V^*$ would result in the creation of a $k$-subgraph (induced by $v'$ and $V^* - u$ for 
some $u \in V^*$) of greater density than $G(V^*)$. Similarly, if the adversary can add or delete $O(k^2)$ edges, then the adversary can create a  $k$-subgraph with greater density than $G(V^*)$.
In particular,  a $k$-clique could be created by adding at most ${k\choose 2}$ edges.

%------------------------------------------------------------------------------------------------------------------------------------------------------------------
% Random case.		
%------------------------------------------------------------------------------------------------------------------------------------------------------------------
% Noise model.
We also consider random graphs $G=(V,E)$ constructed in the following manner.
\begin{itemize}
	\item[($\rho_1$)]
		Fix subset $V^* \subseteq V$ of size $K$. 
		Add $ij$ to $E$  independently with fixed probability $1-q$ for all $(i,j)~\in~V^*~\times~V^*$.	
	\item[($\rho_2$)]
		Each of the remaining potential edges in $(V\times V) - (V^*\times V^*)$ is added independently to $E$ with fixed probability $p$.
\end{itemize}
We say such a graph $G$ is sampled from the \emph{planted dense $k$-subgraph model}.
By construction, the subgraph $G(V^*)$ induced by $V^*$ will likely be substantially more dense than all other
$k$-subgraphs of $G$ if $p + q < 1$.
We wish to determine which choices of $p, q$ and $k$ yield $G$ %, as constructed according to ($\rho_1$) and ($\rho_2$),
such that the planted dense $k$-subgraph $G(V^*)$ can be recovered from the optimal solution of \eqref{eq: dks relaxation}.
%% Related to clique results.
Note that $V^*$ is a $k$-clique of $G$ if $q=0$. 
Theorem~7 of \cite{ames2011nuclear} states that a planted $k$-clique can be recovered from the optimal solution of \eqref{eq: AV prob}
with high probability if $|V| = O(k^2)$ in this case.
The following theorem generalizes this result for all $q \neq 0$.

%% Theorem: Recovery bound.
\begin{theorem} \label{thm: dks recovery}
	Suppose that the $N$-node graph $G$ is sampled from the planted dense $k$-subgraph model %constructed according to $(\rho_1)$, $(\rho_2)$
	 with $p,q$ and $k$ satisfying $p+q < 1$ and
	\begin{equation}
		 \label{eqn: pq assumptions}
		(1-p) k \ge \max \{8 p, \, 1\} \cdot 8\log k , \;\;\;\; \sqrt{p N} \ge (1-p)\log N
	\end{equation}
	\begin{equation}	
		(1 - p - q) k \ge 72 \max \bra{ \rbra{ q(1-q) k \log k }^{1/2}, \log k } \label{eqn: gap ass1}
%		k \ge \max \bra{ \frac{\log^3 k}{p(1-p)}, \; \frac{\log^3 k}{q(1-q)}, \; \rbra{\frac{64 p}{1-p}} \log k }, \;\;\; N \ge \rbra{ \frac{1-p}{p}} \log N
%		\rbra{ \frac{1-p}{p}}\log^3 k ,
	\end{equation}	
	Then there exist absolute constants  $c_1, c_2, c_3 > 0$ such that if
	\begin{align}
		(1-p-q) (1-p)  k & >  c_1 \max \bra{ p^{1/2}, \bbra{(1-p) k }^{-1/2} } \cdot N^{1/2} \log N \label{eqn: gap ass2}
	\end{align}	
	then the $k$-subgraph induced by $V^*$ is the densest $k$-subgraph of $G$ 
	and the proposed solution $(X^*, Y^*)$ is the unique optimal solution of \eqref{eq: dks relaxation} with high probability for 
	\begin{equation} \label{eqn: gamma choice}
		\gamma \in \rbra{ \frac{c_2}{(1-p-q) k },\, \frac{c_3}{(1-p-q) k } }.
	\end{equation}		
\end{theorem}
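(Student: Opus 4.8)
The plan follows the dual-certificate strategy used for sparse-plus-low-rank recovery problems in \cite{chandrasekaran2009rank, chen2012clustering, ames2011nuclear}: verify that the proposed pair $(X^*,Y^*)$ satisfies the subgradient optimality conditions of \eqref{eq: dks relaxation} by exhibiting an explicit dual multiplier, and then sharpen the construction so that it also certifies uniqueness. Put $\u := \bv/\sqrt{k}$, so that $X^* = \bv\bv^T = k\,\u\u^T$ has the single nonzero singular value $k$ with left and right singular vector $\u$; let $T := \{\u\w^T + \z\u^T : \w,\z\in\R^V\}$ be the associated tangent space with orthogonal projection $P_T$, and let $\Omega := \tilde E\cap(V^*\times V^*)$ be the support of $Y^*$, on which $Y^*\equiv-1$. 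Since every box constraint $0\le X^*_{ij}\le 1$ is active, the multiplier construction has considerable slack. I would first show that $(X^*,Y^*)$ is optimal for \eqref{eq: dks relaxation} whenever there are $W\in\R^{V\times V}$, $\mu\in\R$, a matrix $\Lambda$ supported on $\tilde E$ with $\Lambda_{ij}=-\gamma$ on $\Omega$ and $|\Lambda_{ij}|\le\gamma$ on $\tilde E\setminus\Omega$, and $\Phi,\Psi\ge 0$ with $\Phi$ supported off $V^*\times V^*$ and $\Psi$ supported on $V^*\times V^*$, satisfying the stationarity equation
\begin{equation*}
 \u\u^T + W = \mu\,\e\e^T + P_{\tilde E}(\Lambda) + \Phi - \Psi
\end{equation*}
together with $P_T(W)=0$ and $\|W\|\le 1$; strengthening to $\|W\|<1$ and $|\Lambda_{ij}|<\gamma$ off $\Omega$, plus a short argument that the only feasible perturbation direction orthogonal to the multiplier is $0$, upgrades optimality to uniqueness.

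Next I would construct the certificate by reading the stationarity equation block by block, relative to the partition of $V\times V$ into $E\cap(V^*\times V^*)$, $\Omega$, the diagonal, and the two off-clique blocks $E\setminus(V^*\times V^*)$ and $\tilde E\setminus(V^*\times V^*)$. I would take $\mu$ of order $1/k$ and use it together with $\Lambda$ on $\tilde E$ and $\Phi$, $\Psi$ to cancel the dominant rank-one (mean) part of the objective subgradient on each block; in particular the large rank-one contribution $p\,\e\e^T$ coming from the diversionary edges off the clique block is absorbed into $\mu\,\e\e^T$ and $\Lambda$, so that $W$ need only carry (i) the $P_T$-correction on the clique block, which forces $W$ there to be proportional to the per-vertex non-edge counts $d_i:=|\{j\in V^*: ij\in\tilde E\}|$ so that the row and column sums of $W$ over $V^*$ vanish, and (ii) the mean-zero fluctuation of the off-clique random block, with $\Phi$ absorbing the residual nonnegative part. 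The quantitative inputs needed are a Chernoff bound giving $d_i = qk\,(1+o(1))$ with deviation $O\big((q(1-q)k\log k)^{1/2}\big)$ uniformly over $i\in V^*$, and a matrix Bernstein / $\varepsilon$-net bound showing the operator norm of the centered Bernoulli$(p)$ off-clique block is $O(\sqrt{pN})$ up to logarithmic factors.

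To verify the certificate, note that with $\gamma$ as in \eqref{eqn: gamma choice} the entries of $W$ and of $\Lambda|_{\tilde E\setminus\Omega}$ are the above fluctuations divided by a ``signal'' of order $(1-p-q)(1-p)k$; hence $\|W\|<1$ and $|\Lambda_{ij}|<\gamma$ off $\Omega$ reduce to $(1-p-q)(1-p)k \gtrsim \max\{p^{1/2},((1-p)k)^{-1/2}\}\,N^{1/2}\log N$, which is exactly \eqref{eqn: gap ass2}, so both hold \whp{}. Assumption \eqref{eqn: pq assumptions} is used to ensure the multipliers $\Phi,\Psi$ produced by the construction are genuinely nonnegative and that no diversionary vertex accumulates too many edges into $V^*$, while assumption \eqref{eqn: gap ass1} controls the deviations $d_i-qk$ and, through a union bound over $k$-subsets, shows that $G(V^*)$ --- which contains $\binom{k}{2}(1-q)(1+o(1))$ edges --- is the unique densest $k$-subgraph. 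For uniqueness I would finally argue that any feasible direction $(\Delta_X,\Delta_Y)$ annihilated by the certificate must have $\Delta_Y$ supported on $\Omega$ and $P_T(\Delta_X)$ confined to the $\u\u^T$ direction, which combined with $\e^T\Delta_X\e=0$ and the activity of all box constraints forces $\Delta_X=0$ and hence $\Delta_Y=0$.

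The main obstacle is the pair of estimates underlying Steps two and three: simultaneously (a) building $W$ on the clique block so that it \emph{exactly} annihilates $P_T$ while remaining spectrally small despite the random edge deletions, and (b) bounding the operator norm of the off-clique random block sharply enough --- once its rank-one mean has been absorbed into $\mu\,\e\e^T$ and $\Lambda$ --- to reach precisely the threshold \eqref{eqn: gap ass2}. This is where essentially all of the probabilistic work (matrix concentration, uniform degree concentration, and careful tracking of absolute constants) resides; the rest of the argument is bookkeeping of the block equations.
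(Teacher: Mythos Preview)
Your proposal is essentially the paper's proof: the same KKT/dual-certificate framework, the same block-by-block construction of $W$ and the $\ell_1$ multiplier, and the same probabilistic ingredients (scalar Bernstein for degree concentration inside $V^*$, matrix Bernstein for $\|W\|$ on the off-clique block). Two places where your sketch drifts from what actually works are worth flagging.

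First, you do not need (and should not attempt) a union bound over all $k$-subsets to conclude that $G(V^*)$ is the densest $k$-subgraph. Once $(X^*,Y^*)$ is shown to be the unique optimum of \eqref{eq: dks relaxation}, this is automatic: any other $k$-subset $\hat V$ yields a feasible pair $(\hat\bv\hat\bv^T,-P_{\tilde E}(\hat\bv\hat\bv^T))$ with the same nuclear-norm value $k$, so strict optimality forces a strictly larger $\ell_1$ term, hence strictly fewer edges. A direct union bound over $\binom{N}{k}$ subsets under the stated hypotheses would be delicate at best.

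Second, on the ``border'' block $V^*\times(V\setminus V^*)$ the paper does not simply load the centered Bernoulli$(p)$ fluctuation into $W$ and push the remainder into $\Phi$. Because $W\bv=0$ must hold \emph{exactly} column by column, the non-edge entries there are set to $-\lambda\,n_j/(k-n_j)$, depending on the random degree $n_j$ of $j\in V\setminus V^*$ into $V^*$; bounding the norm of this data-dependent correction (not just the centered adjacency matrix) is precisely what produces the $((1-p)k)^{-1/2}$ branch of the maximum in \eqref{eqn: gap ass2}. Your description of that step as ``$\Phi$ absorbing the residual nonnegative part'' hides this, and in fact the paper takes the lower-bound box multiplier to be identically zero.

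Otherwise your outline matches the paper closely, including the choice of $\mu$ of order $1/k$, the parametrization of the upper-bound multiplier on $V^*\times V^*$ through the per-vertex deletion counts, and the use of strict inequalities $\|W\|<1$, $|\Lambda_{ij}|<\gamma$ off $\Omega$ for uniqueness; the paper certifies uniqueness via an SDP lift and complementary slackness rather than your feasible-direction argument, but either route is fine.
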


Here, and in the rest of the paper, an event is said to occur \emph{with high probability (w.h.p.)} if it occurs with probability tending
polynomially to $1$ as $k$ (or $\min\{k_1, k_2\}$ in Section~\ref{sec: dkb prob}) approaches $+\infty$.

The constant $c_1$ places a lower bound on the size of planted clique recoverable, while $c_2$ and $c_3$ provided a range of acceptable regularization parameters.
The assumption \eqref{eqn: gap ass2} places a lower bound on the density of the subgraph $G(V^*)$ induced by the planted clique in terms of its size $k$. On the other hand, the assumption \eqref{eqn: pq assumptions} is used to place bounds on the number
of noise edges in $G$; in particular, \eqref{eqn: pq assumptions} ensures that $p$ is not too close to $0$ or $1$, primarily
for the sake of simplifying the proof of Theorem~\ref{thm: dks recovery}.
If \eqref{eqn: pq assumptions} and \eqref{eqn: gap ass2} are satisfied, then
Theorem~\ref{thm: dks recovery} states that if we set the regularization parameter $\gamma$ equal to
$
	\gamma = {\kappa}/ ((1- p - q)k)
$
for any $\kappa$ in the interval $(c_2, c_3)$, then we can recover a planted $k$-clique of $G$ w.h.p.~from the optimal solution of \eqref{eq: dks relaxation} provided that the size of the planted clique $k$ is greater than a constant $c_1$ times the maximum
of $\sqrt{p N} \log N$ and $\sqrt{N/((1-p) k)} \log N$. 
% Ref#2 3a.
That $\gamma$ is chosen from an interval and not a single value as in Theorem~\ref{thm: dks adv guarantee} is a
consequence of the probabilistic analysis in Section~\ref{sec: proof}; see Sections~\ref{sec: F bound}
and \ref{sec: W bound}.
 The fact that we deterministically construct the graph $G$
in Theorem~\ref{thm: dks adv guarantee} allows us to choose a single value of $\gamma$ in order to simplify our analysis substantially; the proof of Theorem~\ref{thm: dks adv guarantee} can be modified to ensure recovery
for a range of $\gamma$ as in the probabilistic case (see the analysis in the supplemental material found 
in \cite{DKSapps}).
% Ref#2 3a.

To further clarify the implications of Theorem~\ref{thm: dks recovery} we consider two cases.
Suppose that the graph $G$ constructed according to ($\rho_1$) and ($\rho_2$) is dense; that is both $p$ and $q$ are fixed with respect to $k$ and $N$.
 In this case,  Theorem~\ref{thm: dks recovery}  suggests that $G(V^*)$ is the densest $k$-subgraph and its matrix representation is the unique optimal solution of
 \eqref{eq: dks relaxation} w.h.p.~provided that $k$ is at least as large as $\Omega(\sqrt{N} \log N)$.
 This lower bound matches that of \cite[Theorem~7]{ames2011nuclear}, as well as 
\cite{kuvcera1995expected,alon1998finding,feige2000finding,mcsherry2001spectral,feige2010finding,dekel2010finding,deshpande2013finding}, up to the constant and logarithmic factors, despite
 the presence of additional noise in the form of edge deletions.
 Moreover, modifying the proof of Theorem~\ref{thm: dks recovery} to follow the  proof of  \cite[Theorem~7]{ames2011nuclear} shows that  the planted dense $k$-subgraph can be recovered w.h.p.\;%using \eqref{eq: dks relaxation}
 provided $k=\Omega(\sqrt{N})$ in the dense case; see the remarks following Lemma~\ref{lem: R bound}.
 % R#2 3b.
% Indeed, replacing the proof of Lemma~\ref{lem: R bound} to follow that of \cite[Theorem~7]{ames2011nuclear} (with $W = R$)  shows that the dual variable $R$ satisfies $\|R\| = O(\sqrt{N})$ and we have exact recovery when $k = O(\|R\|) = O(\sqrt{N})$
% in the dense case. 
  % R#2 3b.
 Whether planted cliques of size $o(\sqrt{N})$ can be recovered in polynomial-time is still an open problem, although this task is widely believed to be intractable
 (and this presumed hardness has been exploited in cryptographic applications \cite{juels2000hiding} and complexity analysis \cite{alon2007testing,hazan2011hard,alon2011inapproximability,berthet2013computational}). 
Moreover, a number of algorithmic approaches \cite{jerrum1992large,feige2003probable,nadakuditi2012hard} have been shown to fail to recover planted cliques of size $o(\sqrt{N})$ in polynomial-time.
 
When the noise obscuring the planted clique is sparse, i.e., both $p$ and $q$ are tending to $0$ as $N \ra\infty$, the lower bound on the size of a recoverable clique can be significantly improved.
For example, if $p, q = O(1/k)$ then Theorem~\ref{thm: dks recovery} states that the planted clique can be recovered w.h.p. if $k = \Omega(N^{1/3} \log N)$.
On the other hand, if either $p$ or $q$ tends to $1$ as $N \ra\infty$,  then the minimum size of $k$ required
for exact recovery will necessarily increase.

It is important to note that  the choice of $\gamma$ in both Theorem~\ref{thm: dks adv guarantee} and Theorem~\ref{thm: dks recovery}  ensuring exact recovery is not universal, but rather depends on 
the parameters governing  edge addition and deletion.
These quantities are typically not known in practice.
However, under stronger assumptions on the edge corrupting noise, $\gamma$  independent of the unknown noise parameters may be identified.
For example,  if we impose the stronger assumption that  $p + q \le 1/2$, then we may take $\gamma = 6/k$.

\section{The Densest $(k_1, k_2)$-Subgraph Problem}
\label{sec: dkb prob}
% Definition of bipartite graph, dkbs, etc.
Let ${G = (U,V,E)}$ be a bipartite graph. That is, ${G}$ is a graph whose vertex set can be partitioned into
two independent sets $U$ and $V$.
We say that a bipartite subgraph $H = (U',V', E')$  is
a \textit{$(k_1 k_2)$-subgraph} of $G$ if $U' \subseteq U$ and $V'\subseteq V$ such that  $|U'|\cdot|V'| = k_1 k_2$.
Given bipartite graph $G$ and integers $k_1, k_2$, the \emph{densest $(k_1, k_2)$-subgraph problem} seeks  the $(k_1, k_2)$-subgraph of $G$ containing
maximum number of edges.
This problem is NP-hard, by reduction from the maximum edge biclique problem \cite{peeters2003maximum}, and hard to approximate \cite{feige2002relations,goerdt2004approximation}.

%----------------------------------------------
% Motivation for relaxation.
%----------------------------------------------
As before, we consider a convex relaxation of the densest $(k_1, k_2)$-subgraph problem motivated by the fact that the adjacency matrices of dense $(k_1, k_2)$-subgraphs
are closely approximated by rank-one matrices.
% Biclique case.
If $(U', V')$ is a biclique of $G$, i.e. $ij \in E$ for all $i\in U'$, $j \in V'$ then the bipartite subgraph $G(U',V')$ induced by $(U',V')$
is a $(k_1,k_2)$-subgraph of $G$, containing all $k_1 k_2$ possible edges between $U'$ and $V'$.
In this case, the $(U,V)$ block of the adjacency matrix of $G(U',V')$ is equal to $X' = \bubv$, where $\u$ and $\bv$ are the characteristic vectors of $U'$ and $V'$ respectively.
Note that $X'$ has rank equal to one.
% Otherwise.
If $(U',V')$ is not a biclique of $G$, then there exists some $i \in U'$, $j\in V'$ such that $ij \notin E$. In this case,
the $(U,V)$ block of the adjacency matrix of $G(U',V')$ has the form $X' + Y'$, where $Y' = -P_{\tilde E}(X\rq{})$.
Here $P_{\tilde E}$ denotes  the orthogonal projection onto the set of matrices with support contained in the complement  $\tilde E := (U\times V) - E$ of the edge set $E$.
%% New hard problem.
As such, the densest $(k_1,k_2)$-subgraph problem may be formulated as the rank constrained cardinality minimization problem:
\begin{align*}
	\min \left\{ \| Y \|_0: \rank X = 1, \; 
					\e^T X \e = k_1 k_2, \;
					X_{ij} + Y_{ij} = 0, \; \forall \, ij \in \tilde E, \;
					X \in \{0,1\}^{U\times V} \right\}.
%	\min & \| Y \|_0 \\
%	\st 	& \rank(X) = 1 \\
%		& \e^T X \e = k_1 k_2 \\
%		& X_{ij} + Y_{ij} = 0, \;\; \forall \; ij \in \tilde E \\
%		& X \in \{0,1\}^{U\times V}. 
\end{align*}
This problem is identical to \eqref{eq: dks min l0} but for a slightly different definition of the set $\tilde E$, a different right-hand side in the sum constraint,
and omission of symmetry constraints.
%% New relaxation.
As before, we obtain a tractable convex relaxation by  moving the rank constraint to the objective as a regularization term,  relaxing rank and the $\ell_0$ norm with
the nuclear norm and $\ell_1$ norm, respectively, and replacing the binary constraints with appropriate box constraints:
\begin{equation}	\label{eq: dkb relaxation}
	\min \left\{ \|X\|_*+ \gamma \|Y\|_1 :					
					\e^T X \e = k_1 k_2, \;
					X_{ij} + Y_{ij} = 0, \; \forall \, ij \in \tilde E, \;
					X \in [0,1]^{U\times V} \right\}.
%	\begin{array}{rl}
%		\min 	&  \\
%		\st 		& \e^T X \e = k_1 k_2 \\
%				& X_{ij} + Y_{ij} = 0 \mbox{ if } ij \in \tilde E \\
%				& X \in [0,1]^{U\times V}.
%	\end{array}
\end{equation}
Again, except for superficial differences, this problem is identical to the convex relaxation of \eqref{eq: dks min l0} given by \eqref{eq: dks relaxation}.
As can be expected, the recovery guarantees for the relaxation of the densest $k$-subgraph problem translate to similar guarantees for the
convex relaxation \eqref{eq: dkb relaxation} of the densest \kbs\,  problem.

%----------------------------------------------
% The planted case.
%----------------------------------------------
As in Section~\ref{sec: dks prob}, we consider the performance of the relaxation \eqref{eq: dkb relaxation} in the special case that the input graph contains an
especially dense $(k_1,k_2)$-subgraph.
As before, we consider graphs constructed to contain such a subgraph as follows. First, all edges between $U^*$ and $V^*$ are added for
a particular pair of subsets $U^* \subseteq U$, $V^* \subseteq V$ such that $|U^*| = k_1$ and $|V^* |= k_2$.
Then some of the remaining potential edges in $U\times V$ are added while some of the edges between $U^*$ and $V^*$ are deleted.
As in the previous section, this introduction of noise is either performed deterministically by an adversary or at random with each edge added or deleted independently with fixed probability.
The following theorem provides bounds on the amount of deterministic noise ensuring exact recovery of the planted dense \kbs\, by \eqref{eq: dkb relaxation}.

%----------------------------------------------
% The adversarial case.
%----------------------------------------------
\begin{theorem} \label{thm: kdb det case}
	% def G.
	Let $G = (U,V,E)$ be a bipartite graph and let $U^* \subseteq U$, $V^*\subseteq V$ be subsets of cardinality $k_1$ and $k_2$ respectively.
	% Proposed sols.
	Let $\u$ and $\bv$ denote the characteristic vectors of $U^*$ and $V^*$, and
	let $ (X^*,Y^*) = (\bubv, -P_{\tilde E}(\bubv))$.
	% Def of r,s.
	Suppose that $G(U^*, V^*)$ contains at least $k_1 k_2 - s$ edges and that $G$ contains at most $r$ edges other than those in $G(U^*, V^*)$.
	% Adjacency conds.
	Suppose that every node in $V^*$ is adjacent to at least $(1 - \alpha_1) k_1$ nodes in $U^*$ and every node in $U^*$ is adjacent to at least $(1 - \alpha_2) k_2$
	nodes in $V^*$ for some scalars $\alpha_1, \alpha_2 > 0$.
	Further, suppose that each node in $V-V^*$ is adjacent to at most $\beta_1 k_1$ nodes in $U^*$ and each node in $U-U^*$ is adjacent to at most $\beta_2 k_2$ nodes in $V^*$ for
	some $\beta_1, \beta_2 > 0$.
	% alpha, beta not too big.
	Finally suppose that the scalars $\alpha_1, \alpha_2, \beta_1, \beta_2$ satisfy
	$
		\alpha_1 + \alpha_2 + \max\{ \beta_1, \beta_2\} < 1.
	$
	Then there exist scalars $c_1, c_2 > 0$, depending only on $\alpha_1, \alpha_2, \beta_1,$ and $\beta_2$, such that if
	%\begin{equation}	\label{eq: kdb r,s} 
	$	r \le c_1 k_1 k_2$ and $s \le c_2 k_1 k_2$
	%\end{equation}
	then
	$G(U^*, V^*)$ is the unique maximum density $(k_1, k_2)$-subgraph of $G$ and $(X^*,Y^*) $ is the unique optimal solution of \eqref{eq: dkb relaxation} for	
	$%\begin{equation}	\label{eq: kdb det case gamma}
		\gamma = 2 \bbra{{\sqrt{k_1 k_2} } \rbra{ 1 - \alpha_1 - \alpha_2 - \max\{\beta_1, \beta_2\} }  }^{-1}.
	$%\end{equation}	
\end{theorem}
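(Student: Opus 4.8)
The plan is to certify that $(X^*, Y^*)$ is the unique optimal solution of \eqref{eq: dkb relaxation} by constructing a dual certificate, following the template of Theorem~\ref{thm: dks adv guarantee} and the argument of Section~\ref{sec: proof} adapted to the bipartite geometry; the combinatorial conclusion that $G(U^*,V^*)$ is the unique densest $(k_1,k_2)$-subgraph then comes essentially for free. First I would note that $(X^*,Y^*)$ is feasible: $\e^T X^* \e = |U^*|\,|V^*| = k_1 k_2$, $X^* = \u\bv^T \in \{0,1\}^{U\times V} \subseteq [0,1]^{U\times V}$, and $Y^* = -P_{\tilde E}(\u\bv^T)$ satisfies $X^*_{ij} + Y^*_{ij} = 0$ on $\tilde E$ by construction. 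Since \eqref{eq: dkb relaxation} is convex with a polyhedral feasible set, $(X^*,Y^*)$ is optimal exactly when the KKT conditions hold, i.e.\ there exist: a scalar $\mu$ (for $\e^T X \e = k_1 k_2$); a matrix $W$ supported on $\tilde E$ that is a $\gamma$-scaled subgradient of $\|\cdot\|_1$ at $Y^*$ — so $|W_{ij}| = \gamma$ with the sign of $Y^*_{ij}$ on the (at most $s$) deleted edges of $G(U^*,V^*)$, where $Y^*_{ij} = -1$, and $|W_{ij}| \le \gamma$ elsewhere on $\tilde E$; entrywise nonnegative matrices $\Phi$ supported off $U^*\times V^*$ and $\Psi$ supported on $U^*\times V^*$ (for $X \ge 0$ and $X \le 1$, by complementary slackness); and a matrix $M$ with $\u^T M = 0$, $M\bv = 0$ and spectral norm $\|M\| \le 1$, such that (with the multiplier signs fixed by the subgradient conditions)
\[ \frac{\u\bv^T}{\sqrt{k_1 k_2}} + M + \mu\, \e\e^T - W + \Phi - \Psi = 0, \]
where $\u\bv^T/\sqrt{k_1 k_2}$ is the canonical term in the nuclear-norm subdifferential at the rank-one matrix $X^*$ and $\u\bv^T/\sqrt{k_1 k_2} + M$ sweeps out $\partial\|X^*\|_*$.

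I would build the certificate block by block, partitioning $U\times V$ into $U^*\times V^*$, $U^*\times(V\setminus V^*)$, $(U\setminus U^*)\times V^*$, and $(U\setminus U^*)\times(V\setminus V^*)$. On $U^*\times V^*$ (where $X^* \equiv 1$) the scalar $\mu$, the multiplier $\Psi$, and $W$ on the deleted edges are used to adjust $\u\bv^T/\sqrt{k_1 k_2}$ so that the residual $M$ has all of its $V^*$-column sums and $U^*$-row sums equal to zero; the hypotheses that each node of $V^*$ is non-adjacent to at most $\alpha_1 k_1$ nodes of $U^*$, each node of $U^*$ to at most $\alpha_2 k_2$ nodes of $V^*$, and that $G(U^*,V^*)$ misses at most $s$ edges in total keep this correction small in spectral norm. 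On the three off-diagonal blocks, where $X^* = 0$, one absorbs the contribution using $W$ on the non-edges (bounded by $\gamma$ in modulus) and the one-sided multiplier $\Phi$ on the stray edges — of which there are at most $r$ in total, with at most $\beta_1 k_1$ or $\beta_2 k_2$ incident to any single column or row. One then bounds $\|M\|$ by splitting $M$ into its $U^*\times V^*$ restriction plus the off-block pieces: the former contributes a term of order $(s/(k_1 k_2))^{1/2}$ together with a term of order $(\alpha_1+\alpha_2)\gamma\sqrt{k_1 k_2}$, and the off-block pieces contribute of order $\max\{\beta_1,\beta_2\}\gamma\sqrt{k_1 k_2}$ plus an $O((r/(k_1 k_2))^{1/2})$ error. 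The prescribed value $\gamma = 2 \bbra{{\sqrt{k_1 k_2}}\,\rbra{1 - \alpha_1 - \alpha_2 - \max\{\beta_1,\beta_2\}}}^{-1}$ is exactly the choice that makes the $\alpha$- and $\beta$-dependent terms sum to $1 - (1 - \alpha_1 - \alpha_2 - \max\{\beta_1,\beta_2\}) < 1$, leaving room to absorb the $r$- and $s$-errors and guarantee $\|M\| < 1$ provided $r \le c_1 k_1 k_2$ and $s \le c_2 k_1 k_2$ for suitable $c_1, c_2$ depending only on $\alpha_1,\alpha_2,\beta_1,\beta_2$; isolating these constants is a routine but bookkeeping-heavy computation, entirely parallel to Theorem~\ref{thm: dks adv guarantee}.

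For uniqueness I would arrange the certificate to be strictly dual feasible off the active set — $|W_{ij}| < \gamma$ strictly on the entries of $\tilde E$ other than the deleted edges of $G(U^*,V^*)$, and $\|M\| < 1$ strictly — and then run the standard argument of Section~\ref{sec: proof}: for any other optimal $(X',Y')$, combining the subgradient inequalities for $\|\cdot\|_*$ and $\gamma\|\cdot\|_1$ with the certificate equation forces the perturbation $X' - X^*$ to have vanishing component both inside and outside the tangent space spanned by $\u$ and $\bv$, whence $X' = X^*$ and $Y' = Y^*$. Finally, every $(k_1,k_2)$-subgraph $H$ of $G$ yields a feasible point of \eqref{eq: dkb relaxation} of the form $(\u_H\bv_H^T,\, -P_{\tilde E}(\u_H\bv_H^T))$ with objective value $\sqrt{k_1 k_2} + \gamma\,\#\{\text{missing edges of } H\}$, since $\|\u_H\bv_H^T\|_* = \sqrt{k_1 k_2}$ for every such rank-one $0/1$ matrix; uniqueness of the relaxation optimum at the rank-one point $X^*$ therefore forces $G(U^*,V^*)$ to be the unique maximizer of the edge count, i.e.\ the unique densest $(k_1,k_2)$-subgraph. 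The main obstacle is the spectral-norm estimate for $M$: choosing the block multipliers so that all the support, sign, and nonnegativity constraints hold simultaneously while keeping $\|M\|$ strictly below $1$ is the delicate step, and it is precisely there that the hypothesis $\alpha_1 + \alpha_2 + \max\{\beta_1,\beta_2\} < 1$ and the specific value of $\gamma$ become necessary.
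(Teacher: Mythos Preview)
Your proposal is correct and follows essentially the same approach as the paper: the paper explicitly states that the proof of Theorem~\ref{thm: kdb det case} is omitted but ``follows a similar structure'' to Section~\ref{sec: proof}, and your dual-certificate construction (KKT conditions, block-by-block choice of multipliers, spectral-norm bound on the nuclear-norm subgradient residual, strict dual feasibility for uniqueness) is exactly that template specialized to the bipartite deterministic setting. Your notation swaps the roles of $M$ and $W$ relative to the paper's Theorem~\ref{thm: KKT conds}, and your uniqueness argument is phrased via subgradient inequalities rather than the SDP complementary-slackness argument of Appendix~\ref{app: KKT proof}, but these are cosmetic differences.
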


Here, $(\alpha_1, \alpha_2)$ are analogous to $\delta_1$ and $(\beta_1, \beta_2)$ are analogous to $\delta_2$ in Theorem~\ref{thm: dks adv guarantee}. That is, Theorem~\ref{thm: kdb det case} implies that we may recover the densest $(k_1, k_2)$-subgraph of $G$ provided this densest $(k_1, k_2)$-subgraph is sufficiently dense. As in the earlier theorem,
``sufficiently dense" is controlled by the minimum degree of nodes in $G(U^*, V^*)$, parametrized by $(\alpha_1, \alpha_2)$ and $c_2$, and the maximum number of neighbours
outside of $G(U^*,V^*)$ that each node in $U^*$ and $V^*$ may have, as parametrized by $(\beta_1, \beta_2)$ and $c_1$.

As before, the bounds on the number of edge corruptions that guarantee exact recovery given by Theorem~\ref{thm: kdb det case} are identical to those provided in \cite[Section~5.1]{ames2011nuclear} (up to constants).
Moreover, these bounds are optimal for reasons similar to those in the discussion immediately following Theorem~\ref{thm: dks adv guarantee}.
	
%----------------------------------------------
% The random case.
%----------------------------------------------

A similar result holds for random bipartite graphs $G = (U,V,E)$ constructed as follows:
\begin{itemize}
	\item[$(\psi_1)$] For some $k_1$-subset $U^* \subseteq U$ and $k_2$-subset $V^*\subseteq V$, we add each potential edge from $U^*$ to $V^*$ independently with  probability $1-q$.
	\item[($\psi_2$)] Then each remaining possible edge is added independently to $E$ with  probability $p$.
\end{itemize}

By construction $G(U^*, V^*)$ is dense  in expectation, relative to its complement, if  $p + q < 1$.
Theorem~9 of \cite{ames2011nuclear} asserts that $G(U^*,V^*)$ is the densest $(k_1,k_2)$-subgraph of $G$ and can be recovered
using a modification of \eqref{eq: dkb relaxation}, for sufficiently large $k_1$ and $k_2$, in the special case that $q = 0$.
The following theorem generalizes this result for all $p$ and $q$.

% Recovery guarantee.
\begin{theorem}	\label{thm: kdb random case}
	Suppose that the $(N_1, N_2)$-node bipartite graph $G = (U,V,E)$ is constructed according to $(\Psi_1)$ and $(\Psi_2)$ such that $p + q < 1$ and
	\begin{align}
		(1 - p) k_i & \ge \max\{8, 64 p \} \log k_i \\ %\label{ass: p upper bound} \\
		p N_i & \ge (1-p)^2 \log^2 N_i 		\\%\label{ass: p lower bound} \\
		(1 - p -q) k_i &\ge 72 \max\bra{\log k_i,  \bbra{ q(1-q) k_i \log k_i}^{1/2} } % \label{ass: gap nonneg} 
	\end{align}
	for $i=1,2$.
	Then there exist absolute constants $c_1, c_2, c_3 > 0$ such that if
	\begin{equation}	\label{ass: gap W}
		c_1 (1-p-q) \sqrt{k_1k_2} \ge \bar N^{1/2} \log \bar N \cdot\max \bra{ p^{1/2}, ( (1-p) \min\{k_1,k_2\} )^{-1/2} },
	\end{equation}			
	where $\bar N = \max\{N_1, N_2\}$,
	then $G(U^*,V^*)$ is the densest $(k_1, k_2)$-subgraph  of $G$
	and $(X^*,Y^*)$ is the unique optimal solution of \eqref{eq: dkb relaxation} w.h.p.~for 
	%$ \gamma  \in \Bbra{ c_2 ((1- \alpha_1 - \alpha_2 -\max\{\beta_1, \beta_2\})\sqrt{k_1 k_2 })^{-1}, c_3 ((1- \alpha_1 - \alpha_2 -\max\{\beta_1, \beta_2\})\sqrt{k_1 k_2 })^{-1} }$
	\begin{equation}	\label{eq: dkb gamma range}
		\gamma \in \rbra{ \frac{c_2}{(1 - p-q)\sqrt{k_1 k_2}},  \; \frac{c_3}{(1 - p-q)\sqrt{k_1 k_2} }}.
	\end{equation}			
\end{theorem}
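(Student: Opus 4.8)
The plan is to follow, essentially verbatim, the proof of Theorem~\ref{thm: dks recovery} in Section~\ref{sec: proof}, changing only what is cosmetic in the passage from the symmetric to the bipartite setting: we drop the symmetry constraints, replace $\bv\bv^T$ by $\u\bv^T$, and replace $k$, $\sqrt{k}$ by $k_1$, $k_2$, $\sqrt{k_1 k_2}$ according to which vertex class is being measured. The first step is to record sufficient optimality conditions for \eqref{eq: dkb relaxation}. After eliminating $Y$ via the coupling constraints $X_{ij}+Y_{ij}=0$ on $\tilde E$ and using the box constraint $X\ge 0$, Lagrangian duality together with the subdifferential calculus for $\|\cdot\|_*$ and $\|\cdot\|_1$ shows that $(X^*,Y^*)$ is optimal provided there exist a scalar $\lambda$ (multiplier for $\e^T X\e = k_1 k_2$), nonnegative box multipliers $\Phi_-,\Phi_+$ obeying the usual complementary slackness, and a matrix $\nu$ supported on $\tilde E$ with $\nu_{ij}=-\gamma$ on $\supp Y^* = \tilde E\cap(U^*\times V^*)$ and $|\nu_{ij}|\le\gamma$ on the remainder of $\tilde E$, such that $M := \lambda\,\e\e^T + \nu + \Phi_- - \Phi_+$ lies in $\partial\|X^*\|_*$; equivalently, $P_T M = \u\bv^T/\sqrt{k_1 k_2}$ and $\|P_{T^\perp}M\|_2\le 1$, where $T$ is the tangent space at $X^*$ to the set of rank-one matrices. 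Uniqueness of the optimal solution follows in the standard way once these are upgraded to $\|P_{T^\perp}M\|_2<1$ and strict satisfaction of the complementary-slackness and sign conditions.

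The second step is to build such a certificate. Following Section~\ref{sec: proof}, I would exploit the freedom in $\Phi_-,\Phi_+,\nu$ to annihilate $M$ wherever possible and to force the $T$-component equation $P_T M = \u\bv^T/\sqrt{k_1 k_2}$; doing so pins $\lambda$ down (up to the degree fluctuations of $G$) and is exactly what confines $\gamma$ to the range stated in the theorem. The residual $W := P_{T^\perp}M$ then decomposes as a term proportional to $\lambda$ supported essentially on the diversionary edges outside $U^*\times V^*$, a term of size $O(\gamma)$ coming from the deleted edges $\supp Y^*$ inside the planted block, and a lower-order deterministic term assembled from $P_{T^\perp}(\e\e^T)$ and the realized vertex degrees.

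The third step is the probabilistic analysis. Under $(\Psi_1)$ and $(\Psi_2)$, Chernoff/Bernstein bounds give, with polynomially high probability, that every node of $V^*$ (resp.\ $U^*$) is adjacent to $(1\pm o(1))(1-q)k_1$ nodes of $U^*$ (resp.\ to $(1\pm o(1))(1-q)k_2$ nodes of $V^*$) and that every node outside the planted sets has $O(p k_1)$, resp.\ $O(p k_2)$, neighbours inside; this supplies the realized values of $\alpha_1,\alpha_2,\beta_1,\beta_2$ appearing in the stated $\gamma$-range and controls the deterministic part of $W$, while the hypotheses $(1-p)k_i\ge\max\{8,64p\}\log k_i$ and $pN_i\ge(1-p)^2\log^2 N_i$ guarantee these events (and the matrix concentration used below) at the claimed probability. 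The crucial estimate is $\|W\|_2<1$. The deletion term is bounded using the third growth hypothesis $(1-p-q)k_i\ge 72\max\{\log k_i,(q(1-q)k_i\log k_i)^{1/2}\}$, which is precisely what makes $\gamma$ times the spectral norm of the random $k_1\times k_2$ deletion matrix small; the diversionary-edge and $P_{T^\perp}(\e\e^T)$ terms are bounded using standard estimates for the spectral norm of a rectangular matrix with independent bounded entries, after which $\|W\|_2<1$ reduces exactly to the gap hypothesis $c_1(1-p-q)\sqrt{k_1 k_2}\ge\bar N^{1/2}\log\bar N\cdot\max\{p^{1/2},((1-p)\min\{k_1,k_2\})^{-1/2}\}$. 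Finally, a separate edge count --- $G(U^*,V^*)$ has $\approx(1-q)k_1 k_2$ edges while any competing $(k_1,k_2)$-subgraph has at most $\approx p k_1 k_2$ plus a degree-controlled correction, and a union bound over all $(k_1,k_2)$-subgraphs closes under the same gap condition --- shows $G(U^*,V^*)$ is the unique densest $(k_1,k_2)$-subgraph.

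As in Section~\ref{sec: proof}, the main obstacle is the inequality $\|W\|_2<1$: the certificate is a superposition of matrices of quite different character, and one must choose $\lambda$ and the multipliers $\nu$, $\Phi_-$ so that the deterministic contribution is small enough to leave room for the randomly fluctuating contributions while keeping $\gamma$ in the prescribed interval. The bipartite asymmetry ($k_1\ne k_2$, $N_1\ne N_2$, two vertex classes instead of one) makes the bookkeeping heavier but introduces no genuinely new difficulty; alternatively, one may pass to a symmetric dilation of $X$ and invoke the argument of Section~\ref{sec: proof} almost verbatim.
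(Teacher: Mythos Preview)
Your proposal is correct and follows essentially the same route the paper indicates: the paper explicitly says the proof of Theorem~\ref{thm: kdb random case} ``follow[s] a similar structure'' to Section~\ref{sec: proof}, and your outline---KKT/subdifferential optimality conditions, casewise construction of the dual certificate, choice of $\lambda$ and the box multipliers to force the $T$-component equation, then probabilistic control of $\|F\|_\infty$ and $\|W\|$ via Bernstein-type bounds---matches that structure in the bipartite setting. One remark: your final ``separate edge count plus union bound over all $(k_1,k_2)$-subgraphs'' is unnecessary (and a union bound over $\binom{N_1}{k_1}\binom{N_2}{k_2}$ competitors would in fact need a sharper tail than the stated gap condition supplies); as in the proof of Theorem~\ref{thm: KKT conds}, uniqueness of $(X^*,Y^*)$ for \eqref{eq: dkb relaxation} already forces $G(U^*,V^*)$ to be the unique densest $(k_1,k_2)$-subgraph, since any competitor $(\hat U,\hat V)$ gives a feasible pair $(\hat\u\hat\bv^T, -P_{\tilde E}(\hat\u\hat\bv^T))$ with the same nuclear norm $\sqrt{k_1 k_2}$ and hence strictly larger $\|Y\|_1$.
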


Theorem~\ref{thm: kdb random case} is the bipartite analogue of Theorem~\ref{thm: dks recovery}. That is,
 Theorem~\ref{thm: kdb random case} implies that we can recover
the planted $(k_1, k_2)$-subgraph of $G$ from the optimal solution of \eqref{eq: dkb relaxation} provided this subgraph is sufficiently large, as characterized by \eqref{ass: gap W},
and  we choose the regularization parameter $\gamma$ from a particular interval of acceptable values given by \eqref{eq: dkb gamma range}.

\section{Exact Recovery of the Densest $k$-Subgraph Under Random Noise}
\label{sec: proof}
% What we will prove.
This section consists of a proof of Theorem~\ref{thm: dks recovery}.
The proofs of Theorems~\ref{thm: dks adv guarantee},~\ref{thm: kdb det case},~and~\ref{thm: kdb random case}
follow a similar structure and are omitted; proofs of these theorems may be found in the supplemental material
\cite{DKSapps}.
% What we will prove (specific)
Let $G = (V,E)$ be a graph sampled from the planted dense $k$-subgraph model.
Let $V^*$ be the node set of the
planted dense $k$-subgraph $G(V^*)$ and $\bv$ be its characteristic vector.
Our goal is to show that the solution $(X^*, Y^*) := (\bv\bv^T, -P_{\tilde E}(\bv\bv^T))$,
where $\tilde E := (V\times V) - (E\cup \{uu: u\in V\})$, is the unique optimal solution of \eqref{eq: dks relaxation}
and $G(V^*)$ is the densest $k$-subgraph of $G$ in the case that $G$ satisfies the hypothesis of Theorem~\ref{thm: dks recovery}.
% How we will prove it.
To do so, we will apply the Karush-Kuhn-Tucker theorem to derive sufficient conditions for optimality of a feasible solution
of \eqref{eq: dks relaxation} corresponding to a $k$-subgraph of $G$ and then establish that these sufficient conditions
are satisfied at $(X^*, Y^*)$ with high probability if the assumptions of Theorem~\ref{thm: dks recovery} are satisfied.

\subsection{Optimality Conditions}
% KKT Conditions.
We will show that $(X^*, Y^*)$ is optimal for \eqref{eq: dks relaxation} and, consequently, $G(V^*)$ is the densest $k$-subgraph of $G$ by 
establishing that $(X^*,Y^*)$ satisfy the sufficient conditions for optimality given by the Karush-Kuhn-Tucker theorem (see \cite[Section 5.5.3]{boydvdb}).
The following theorem provides the necessary specialization of these conditions to \eqref{eq: dks relaxation}. A proof of Theorem~\ref{thm: KKT conds} can be found in Appendix~\ref{app: KKT proof}.

\begin{theorem}	\label{thm: KKT conds}
	Let $G = (V,E)$ be a graph sampled from the planted dense $k$-subgraph model.
	Let $\bar V$ be a subset of $V$ of cardinality $k$ and let $\bar \bv$ be the characteristic vector of $\bar V$.
	Let $\bar X = \barX$ and let $\bar Y$ be defined as in \eqref{eq: proposed Y}.
	Suppose that there exist $F, W \in \R^{V\times V}$, $\lambda \in \R$, and $M \in  \R^{V\times V}_{+}$ such that
	\begin{align}
		\label{eq: KKT dual feas}
			{\bar X}/{k} + W -\lambda \e\e^T- \gamma( \bar Y + F ) + M = 0, \\
		\label{eq: KKT W def}
			W \bar\bv = W^T \bar \bv = 0, \; \|W\| \le 1 , \\
		\label{eq: KKT F def}	
			P_{\Omega} (F) = 0, \;\;\ \|F\|_\infty \le 1, \\
		\label{eq: KKT CS edges}
			F_{ij} = 0 \;\; \mbox{for all } (i,j) \in E \cup \{vv: v \in V\} ,\\
		\label{eq: KKT CS upperbound}
			M_{ij} = 0 \;\; \mbox{for all } (i,j) \in (V\times V) - (\bar V \times \bar V).
	\end{align}
	Then $(\bar X, \bar Y)$ is an optimal solution of \eqref{eq: dks relaxation} and
	 the subgraph $G(\bar V)$ induced by $\bar V$ is a maximum density $k$-subgraph of $G$.
	Moreover, if $\|W\| < 1$ and $\|F\|_\infty < 1$ then
	$(\bar X, \bar Y)$ is the unique optimal solution of \eqref{eq: dks relaxation} and
	$G(\bar V)$ is the unique maximum density $k$-subgraph of $G$.
\end{theorem}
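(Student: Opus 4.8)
The plan is to recognize \eqref{eq: dks relaxation} as a convex program with a convex objective and affine constraints, so that the Karush--Kuhn--Tucker conditions are \emph{sufficient} for global optimality with no constraint qualification required, and then to check that \eqref{eq: KKT dual feas}--\eqref{eq: KKT CS upperbound} are simply a repackaging of those conditions at $(\bar X,\bar Y)$. I would form the Lagrangian of \eqref{eq: dks relaxation} with a scalar multiplier $\lambda$ for $\e^T X\e=k^2$, a matrix multiplier $N$, supported on $\tilde E$, for the equalities $X_{ij}+Y_{ij}=0$, and a nonnegative matrix multiplier $M$ for the upper box constraints $X\le 1$ (the multiplier for $X\ge 0$ may be taken to be $0$). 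Stationarity in $Y$ reads $-N/\gamma\in\partial\|\bar Y\|_1$; since every nonzero entry of $\bar Y$ equals $-1$ we have $\sign(\bar Y)=\bar Y$, so this is equivalent to $N=-\gamma(\bar Y+F)$ for some $F$ that vanishes on $\supp(\bar Y)$ and has $\|F\|_\infty\le 1$, and imposing $\supp(N)\subseteq\tilde E$ forces $F$ to be supported on $\tilde E\setminus\supp(\bar Y)$, i.e. $F$ vanishes on $\bar V\times\bar V$ and on $E\cup\{vv:v\in V\}$ --- precisely \eqref{eq: KKT F def}--\eqref{eq: KKT CS edges} with $\Omega=\bar V\times\bar V$. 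Stationarity in $X$ reads $\lambda\e\e^T-N-M\in\partial\|\bar X\|_*$; since $\bar X=\bar\bv\bar\bv^T=k\,(\bar\bv/\sqrt{k})(\bar\bv/\sqrt{k})^T$ is rank one with this singular value decomposition, $\partial\|\bar X\|_*=\{\bar X/k+W:\ W\bar\bv=W^T\bar\bv=0,\ \|W\|\le 1\}$, and substituting $N=-\gamma(\bar Y+F)$ yields exactly \eqref{eq: KKT dual feas} together with \eqref{eq: KKT W def}; complementary slackness for $M$ is \eqref{eq: KKT CS upperbound}, and that for $\lambda$ holds because $\bar X$ is feasible. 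This gives optimality of $(\bar X,\bar Y)$. Since every $k$-subset $V'$ yields a feasible point of \eqref{eq: dks relaxation} of objective value $k+\gamma\,(k(k-1)-2|E(G(V'))|)$, and $(\bar X,\bar Y)$ has exactly this form with $V'=\bar V$, optimality forces $G(\bar V)$ to maximize the number of edges over all $k$-subgraphs of $G$, i.e. to be a maximum density $k$-subgraph.

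For the uniqueness claim I would run the standard dual-certificate argument. Let $(\tilde X,\tilde Y)$ be any optimal solution, put $\Delta=\tilde X-\bar X$ and $\Delta'=\tilde Y-\bar Y$, and write $G_X=\bar X/k+W\in\partial\|\bar X\|_*$ and $G_Y=\bar Y+F\in\partial\|\bar Y\|_1$. The subgradient inequalities give $\|\tilde X\|_*+\gamma\|\tilde Y\|_1\ge\|\bar X\|_*+\gamma\|\bar Y\|_1+\inp{G_X,\Delta}+\gamma\inp{G_Y,\Delta'}$. Substituting $G_X=\lambda\e\e^T+\gamma G_Y-M$ from \eqref{eq: KKT dual feas}, and using $\e^T\Delta\e=0$ (both points feasible), $\inp{G_Y,\Delta}=-\inp{G_Y,\Delta'}$ (because $G_Y$ is supported on $\tilde E$, where $\Delta+\Delta'=0$), and $\inp{M,\Delta}\le 0$ (because $M\ge 0$ is supported on $\bar V\times\bar V$, where $\tilde X\le 1=\bar X$), the cross terms collapse to $\inp{G_X,\Delta}+\gamma\inp{G_Y,\Delta'}=-\inp{M,\Delta}\ge 0$. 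Optimality of $(\tilde X,\tilde Y)$ makes every inequality an equality; equality in the $\ell_1$ inequality together with $\|F\|_\infty<1$ forces $\Delta'$ to vanish off $\supp(\bar Y)$, and equality in the nuclear-norm inequality together with $\|W\|<1$ forces $\Delta$ into the tangent space $T=\{\bar\bv\y^T+\z\bar\bv^T\}$, via the standard bound $\|\bar X+\Delta\|_*\ge\|\bar X\|_*+\inp{\bar X/k,\Delta}+\|P_{T^\perp}(\Delta)\|_*$ and the fact that $W\in T^\perp$.

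The last step --- deducing $\Delta=0$ from these structural facts --- is where I expect the main difficulty to lie, as it is a combinatorial argument that must use the certificate more finely. The equality constraint on $\tilde E$ upgrades ``$\Delta'=0$ off $\supp(\bar Y)$'' to ``$\Delta=0$ on $\tilde E\setminus\supp(\bar Y)$''; writing $\Delta=\bar\bv\y^T+\z\bar\bv^T$, this says $\y_j=0$ for every $j\notin\bar V$ having a non-neighbor in $\bar V$, and symmetrically for $\z$. To eliminate the remaining entries I would read off from the certificate itself that no vertex outside $\bar V$ can be adjacent to all of $\bar V$: summing \eqref{eq: KKT dual feas} over a block $\{i\}\times\bar V$ with $i\in\bar V$ and using $W\bar\bv=0$, $M\ge 0$, and that $F$ vanishes on $\bar V\times\bar V$ gives $k\lambda\ge 1$, whereas if some $j\notin\bar V$ were adjacent to all of $\bar V$ then \eqref{eq: KKT dual feas} at $(i,j)$ for $i\in\bar V$ (where $\bar X_{ij}=M_{ij}=\bar Y_{ij}=F_{ij}=0$) gives $W_{ij}=\lambda$, so $W^T\bar\bv=0$ yields $k\lambda=0$, a contradiction. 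Hence $\y$ and $\z$ are supported on $\bar V$ and $\Delta$ is supported on $\bar V\times\bar V$, where $\Delta_{ij}=\y_j+\z_i\le 0$; but $\e^T\Delta\e=\sum_{i,j\in\bar V}(\y_j+\z_i)=0$, so every entry of $\Delta$ vanishes and $\Delta=0$. Then $\tilde X=\bar X$, and the equality constraint on $\supp(\bar Y)\subseteq\tilde E$ gives $\tilde Y=\bar Y$. Uniqueness of the densest $k$-subgraph follows at once, since distinct $k$-subsets of $V$ give distinct feasible points of \eqref{eq: dks relaxation}.
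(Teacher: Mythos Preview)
Your derivation of optimality and of ``$G(\bar V)$ is densest'' matches the paper's proof essentially line for line: both recognize \eqref{eq: KKT dual feas}--\eqref{eq: KKT CS upperbound} as the KKT conditions for \eqref{eq: dks relaxation} at $(\bar X,\bar Y)$ with the lower-box multiplier set to zero, and both compare objective values $k+\gamma\|Y\|_1$ across $k$-subsets.

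The uniqueness argument, however, takes a genuinely different route. The paper lifts \eqref{eq: dks relaxation} to a semidefinite program, builds an explicit dual optimal $(\bar Q,\bar H,\bar S^1,\bar S^2)$ from $F,W,\lambda,M$, and uses strict complementarity: because $\|W\|<1$, the slack matrix $\bar Q$ has rank exactly $2N-1$, forcing any other primal optimum $\hat R$ to lie in the one-dimensional null space of $\bar Q$, whence $\hat X=t\bar X$ and then $t=1$. You instead run a direct primal subgradient argument: from equality in the $\ell_1$ and nuclear-norm bounds and the strict inequalities you localize $\Delta'$ to $\Omega$ and $\Delta$ to the tangent space $T$, and then finish with the combinatorial step that no $j\notin\bar V$ can be adjacent to all of $\bar V$ (extracted from the certificate via $W^T\bar\bv=0$ and $k\lambda\ge1$), so that $\Delta$ is supported on $\bar V\times\bar V$, where it is entrywise $\le0$ with zero total sum and hence vanishes. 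This is correct and arguably more elementary --- it avoids the SDP embedding entirely --- but it hinges on that combinatorial observation, which the SDP rank argument sidesteps. The paper's approach, by contrast, is more structural and would port more readily to other rank-one recovery problems where such a vertex-adjacency obstruction is not available.
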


%% Outline of the rest.
It remains to show that multipliers $F, W \in \R^{V\times V}$, $\lambda \in \R$, and $M\in  \R^{V\times V}_{+}$
corresponding to the proposed solution $(X^*, Y^*)$ and satisfying the hypothesis of Theorem~\ref{thm: KKT conds}  do indeed exist. % for the proposed solution $(X^*, Y^*)$.
In particular,			
%% Choice of multipliers.	
we consider $W$ and  $F$ constructed according to the following cases:
\begin{itemize}
	\item[($\omega_1$)]
		If $(i,j) \in V^* \times V^*$ such that $ij \in E$ or $i=j$, choosing
		$
			W_{ij} = \tilde \lambda - M_{ij},
		$
		where $\tilde \lambda := \lambda - 1/k$,
		ensures that the left-hand side of \eqref{eq: KKT dual feas} is equal to $0$.
	\item[($\omega_2$)]
		If $(i,j) \in \Omega = V^*\times V^* \cap \tilde E$, then $F_{ij} = 0$ and choosing
		$W_{ij} = \tilde \lambda - \gamma - M_{ij}$ makes the left-hand side of \eqref{eq: KKT dual feas} equal to $0$.
	\item[($\omega_3$)]
		Let $(i,j) \in (V\times V) - (V^*\times V^*)$ such that $ij \in E$ or $i = j$, then the left-hand side of \eqref{eq: KKT dual feas}
		is equal to $W_{ij} - \lambda$. In this case, we choose $W_{ij} = \lambda$ to make both sides of \eqref{eq: KKT dual feas} zero.
	\item[($\omega_4$)]
		Suppose that $i,j \in V-V^*$ such that $(i,j) \in \tilde E$. We choose 
		$$
			W_{ij} = - \lambda \rbra{\frac{p}{1-p}} , \;\;\; F_{ij} = - \frac{\lambda}{\gamma} \rbra{ \frac{1}{1-p} } .
		$$
		Again, by our choice of $W_{ij}$ and $F_{ij}$, the left-hand side of \eqref{eq: KKT dual feas} is zero.
	\item[($\omega_5$)]
		If $i \in V^*$, $j \in V-V^*$ such that $(i,j) \in \tilde E$ then we choose
		$$
			W_{ij} = - \lambda \rbra{ \frac{n_j}{k- n_j} }, \;\;\; F_{ij} = -\frac{\lambda}{\gamma} \rbra{ \frac{k}{k - n_j} } 
		$$
		where $n_j$ is equal to the number of neighbours of $j$ in $V^*$.
	\item[($\omega_6$)]
		If $i \in V-V^*$, $j\in V^*$ such that $(i,j) \in \tilde E$ we choose $W_{ij}, F_{ij}$ symmetrically according to ($\omega_5$); 
		that is, we choose $W_{ij} = W_{ji}$, $F_{ij} = F_{ji}$ in this case.
\end{itemize}
It remains to construct multipliers $M$, $\lambda$, and $\gamma$ so that 
if $p,q,$ and $k$ satisfy the hypothesis of Theorem~\ref{thm: dks recovery}
then$W$ and $F$ as chosen above satisfy \eqref{eq: KKT W def} and \eqref{eq: KKT F def}.
In this case, $(X^*, Y^*)$ is optimal for \eqref{eq: dks relaxation} and the corresponding densest $k$-subgraph of $G$ can be recovered from $X^*$.
The remainder of the proof is structured as follows.
In Section~\ref{sec: mu}, we construct valid $\lambda \in \R$ and $M \in \R^{V\times V}_+$ such that $W\bv = W^T \bv = 0$.
We next establish  that $\|F\|_\infty  < 1$ w.h.p. for this choice of $\lambda$ and $M$ and a particular choice of regularization parameter $\gamma$ in Section~\ref{sec: F bound}.
We conclude in Section~\ref{sec: W bound} by showing that $\|W\| <  1$  w.h.p.~provided the assumptions of Theorem~\ref{thm: dks recovery} are satisfied.

\subsection{Choice of the Multipliers $\lambda$ and $M$}
\label{sec: mu}

In this section, we construct multipliers $\lambda \in \R$ and $M\in \R^{V\times V}_+$
such that $W\bv = W^T \bv = 0$.
%% noise block row sums.
Note that $[W\bv]_i = \sum_{j \in V^*} W_{ij}$ for all $i \in V$.
If $i \in V-V^*$, we have
$$
	[W\bv]_i = n_i \lambda - (k - n_i) \rbra{ \frac{n_i}{k-n_i} } \lambda = 0
$$
by our choice of $W_{ij}$ in ($\omega_3$) and ($\omega_5$). By symmetry, $[W^T\bv]_i = 0$ for all $i \in V-V^*$.

% clique block row sum
The conditions $W(V^*, V^*) \e = W(V^*,V^*)^T \e= 0$ define $2k$ equations for the $k^2$ unknown entries of  $M$. 
To obtain a particular solution of this underdetermined system, we parametrize $M$ as $M = \y\e^T + \e \y^T$,
for some $\y \in \R^V$.
After this parametrization
$$
	\sum_{j\in V^*} W_{ij} = k \tilde\lambda  - (k-1 -n_i)\gamma- k y_i - \e^T \y.
$$
Rearranging shows that $\y$ is the solution of the linear system
\begin{equation}	\label{eq: y system}
	(kI + \e\e^T)\y =  k\tilde\lambda \e - \gamma( (k-1) \e - \n) ,
\end{equation}
where $\n \in \R^{V^*}$ is the vector with $i$th entry $n_i$ equal to the degree of node $i$ in $G(V^*)$.
By the Sherman-Morrison-Woodbury formula \cite[Equation (2.1.4)]{GV}, we have
%% Formula for y.
\begin{align}	
	\y %&= \frac{1}{k}\rbra{ I - \frac{\e\e^T}{2k} } \rbra{ k  \tilde \lambda- \gamma \rbra{ (k-1) \e - \n }  } %\notag \\ &
		= \frac{1}{2k} \rbra{ k  \tilde \lambda- (k-1) \gamma }\e + \frac{\gamma}{k} \rbra{ \n - \rbra{ \frac{\n^T \e}{2k} } \e } .
		\label{eq: y formula}
\end{align}
and
% Ey
%In expectation,
$%\begin{align}
	\E [\y] %&= \frac{1}{2k} \rbra{ k  \tilde \lambda - (k-1) \gamma }\e + \frac{\gamma}{2k} \rbra{ (k-1)(1-q) \e} %\notag \\ &
			= \bbra{ k  \tilde \lambda - (k-1) \gamma q }\e /(2k)	\label{eq: Ey formula}
$ %\end{align}
by the fact that $\E[\n] = (k-1)(1-q) \e$.
Taking $\lambda = \gamma (\epsilon + q) + 1/k$ yields
$
	\E[\y] = %\frac{\gamma}{2 k} 
	\bbra{k \epsilon + q } \e /(2k).
$
Therefore, each entry of $\y$ and, consequently, each entry of $M$ is positive in expectation for all $\epsilon > 0$.
% Nonnegativity of y.
Therefore, 
 it suffices to show that
\begin{equation} \label{eqn: yinf error bound}
	\| \y - \E[\y] \|_\infty = \frac{\gamma}{k}\left\| \n - \rbra{ \frac{\n^T \e}{2k} } \e  -  \E \sbra{ \n - \rbra{ \frac{\n^T \e}{2k} } \e }  \right\|_\infty
	\le  \frac{\gamma}{2 k} \bbra{k \epsilon + q }
\end{equation}
with high probability to establish that the entries of $M$ are nonnegative with high probability, by the fact that each component $y_i$ is bounded below by $\E[y_i] - \| \y - \E[\y] \|_\infty$.
To do so, we will use the following concentration bound on  the sum of independent Bernoulli variables.

% Bernstein bound for Binomials.
\begin{lemma} \label{lem: Bernstein bound}
	Let $x_1, \dots, x_m$ be a sequence of $m$ independent Bernoulli trials, each succeeding with probability $p$ and
	let $s = \sum_{i=1}^m x_i$ be the binomially distributed variable describing the total number of successes.
	Then
	$
		|s - p m | \le 6 \max \bra{ \sqrt{p(1-p) m \log m}, \log m  }
	$
	with probability at least $1 - 2 m^{-12}$.
\end{lemma}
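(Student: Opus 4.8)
The plan is to treat $s - pm$ as a sum of independent, centered, bounded random variables and apply Bernstein's inequality. Write $s - pm = \sum_{i=1}^m (x_i - p)$; each summand has mean zero, satisfies $|x_i - p| \le \max\{p, 1-p\} \le 1$ almost surely, and has variance $p(1-p)$, so the total variance is $\sigma^2 := p(1-p)m$. Bernstein's inequality then gives, for every $t > 0$,
\[
	\Pr\big[\,|s - pm| \ge t\,\big]\ \le\ 2\exp\!\left(-\frac{t^2/2}{\sigma^2 + t/3}\right).
\]

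Next I would substitute $t = 6\max\{(\sigma^2 \log m)^{1/2},\, \log m\}$ into this bound and check that the exponent is at least $12\log m$. This follows from a short case split according to which of the two terms realizes the maximum --- equivalently, according to whether $\sigma^2 \ge \log m$ or $\sigma^2 < \log m$. In the first case $t = 6(\sigma^2\log m)^{1/2}$ and $(\sigma^2 \log m)^{1/2} \le \sigma^2$, so $\sigma^2 + t/3$ is a bounded multiple of $\sigma^2$; in the second case $t = 6\log m$ and $\sigma^2 + t/3$ is a bounded multiple of $\log m$. In both regimes the exponent $\tfrac12 t^2/(\sigma^2 + t/3)$ is therefore bounded below by an absolute constant times $\log m$, and the leading constant $6$ is calibrated so that this absolute constant is at least $12$. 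Since the two-sided Bernstein bound already supplies the leading factor $2$, this yields $\Pr[|s - pm| \ge t] \le 2m^{-12}$, which is the assertion of the lemma.

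There is no genuine obstacle: the argument is an off-the-shelf concentration inequality applied to a single explicitly chosen threshold, and the verification of the constant in the exponent is a direct computation. The only point that deserves a little care is the case split, since $\sigma^2 = p(1-p)m$ can lie on either side of $\log m$ and the two summands inside the maximum exchange roles accordingly; writing the deviation in the form $6\max\{(\sigma^2\log m)^{1/2}, \log m\}$ is exactly what lets both regimes go through with the same constant. An essentially equivalent route is to invoke the multiplicative Chernoff bound for the binomial variable $s$ directly and split into the ranges $t \lesssim \sigma^2$ and $t \gtrsim \sigma^2$; I would use whichever formulation keeps the constants most transparent.
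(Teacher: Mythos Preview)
Your approach is exactly what the paper intends: it states that the lemma is a specialization of the scalar Bernstein inequality to binomial variables and leaves the proof to the reader, so your plan---apply Bernstein to $\sum (x_i - p)$ and split according to whether $\sigma^2 = p(1-p)m$ dominates $\log m$---is the intended one.

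One arithmetic point deserves correction. With $t = 6\max\{(\sigma^2\log m)^{1/2},\log m\}$ the exponent $\tfrac{t^2/2}{\sigma^2 + t/3}$ comes out to at least $6\log m$, not $12\log m$: in the regime $\sigma^2 \ge \log m$ one has $t^2/2 = 18\sigma^2\log m$ and $\sigma^2 + t/3 \le 3\sigma^2$, giving $6\log m$; the other regime is identical. So the bound you actually obtain is $2m^{-6}$ rather than $2m^{-12}$. This is immaterial for the paper's purposes (any polynomial decay suffices for the union bounds downstream), but your sentence ``the leading constant $6$ is calibrated so that this absolute constant is at least $12$'' overstates what the computation delivers; either the lemma's exponent should read $-6$, or the leading constant should be taken somewhat larger than $6$.
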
	 	

Lemma~\ref{lem: Bernstein bound} is a specialization of the standard Bernstein inequality (see, for example,\cite[Theorem~6]{lugosi2009})
to binomially distributed random variables; the proof is left to the reader.
	
We are now ready to state and prove the desired lower bound on the entries of $\y$.

%% Bound on \|y - E[y] \|.
\begin{lemma}
	\label{lem: bound on y}	
	For each $i \in V^*$, we have
	\begin{equation} \label{eq: y bound}
		 y_i \ge  \gamma \rbra{ \frac{\epsilon }{2}  - 12 \max \bra{ \rbra{ \frac{q(1-q) \log k}{k} }^{1/2}, \frac{\log k}{k} }}.
	\end{equation}
	with high probability.
\end{lemma}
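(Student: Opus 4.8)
The plan is to apply the concentration bound in Lemma~\ref{lem: Bernstein bound} twice, once to each ``random'' term appearing in the formula \eqref{eq: y formula} for $y_i$, and then combine the two estimates via the triangle inequality. Recall from the discussion preceding the lemma that $\lambda = \gamma(\epsilon + q) + 1/k$ was chosen precisely so that $\E[y_i] = \gamma(k\epsilon + q)/(2k) \ge \gamma\epsilon/2$, and that it therefore suffices to control the deviation $\|\y - \E[\y]\|_\infty$, since $y_i \ge \E[y_i] - \|\y - \E[\y]\|_\infty$ pointwise. From \eqref{eq: y formula}, the only stochastic part of $y_i$ is $\frac{\gamma}{k}\bigl(n_i - \frac{\n^T\e}{2k}\bigr)$, where $n_i$ is the degree of node $i$ in $G(V^*)$ and $\n^T\e = \sum_{j\in V^*} n_j = 2|E(G(V^*))|$.

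First I would bound $|n_i - \E[n_i]|$. Under the planted dense $k$-subgraph model, $n_i$ is a sum of $k-1$ independent Bernoulli$(1-q)$ trials, so Lemma~\ref{lem: Bernstein bound} (with $m = k-1$ and success probability $1-q$, noting $(1-q)q = q(1-q)$) gives
\begin{equation*}
	|n_i - (k-1)(1-q)| \le 6\max\bigl\{(q(1-q)(k-1)\log(k-1))^{1/2},\, \log(k-1)\bigr\}
\end{equation*}
with probability at least $1 - 2(k-1)^{-12}$. Second, I would bound $\bigl|\n^T\e - \E[\n^T\e]\bigr|$; since $\n^T\e = 2|E(G(V^*))|$ counts (twice) a sum of $\binom{k}{2}$ independent Bernoulli$(1-q)$ edge indicators, Lemma~\ref{lem: Bernstein bound} with $m = \binom{k}{2} \le k^2/2$ yields $\bigl|\n^T\e - k(k-1)(1-q)\bigr| \le 12\max\{(q(1-q)k^2\log k)^{1/2}, \log k\}$ with probability at least $1 - 2\binom{k}{2}^{-12}$, so that $\bigl|\tfrac{\n^T\e}{2k} - \tfrac{(k-1)(1-q)}{2}\bigr| \le \tfrac{6}{k}\max\{(q(1-q)k^2\log k)^{1/2}, \log k\} \le 6\max\{(q(1-q)\log k / k)^{1/2} \cdot \sqrt{k}/\sqrt{k}, \ldots\}$ — more cleanly, this is $\le 6\max\{(q(1-q)\log k/k)^{1/2}\cdot\sqrt{k}, (\log k)/k\}$, wait, I would simply keep it as $\tfrac{6}{k}\max\{(q(1-q)k^2\log k)^{1/2},\log k\}$ and observe it is dominated by $6\max\{(q(1-q)\log k / k)^{1/2}\cdot k^{?}, \ldots\}$; the cleanest route is to note both deviation terms are bounded by $6k\max\{(q(1-q)\log k/k)^{1/2}, (\log k)/k\}$ after dividing by the appropriate factors. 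Dividing the two estimates by $k$ (per the $\gamma/k$ prefactor in \eqref{eq: y formula}) and adding, the triangle inequality gives
\begin{equation*}
	\bigl|y_i - \E[y_i]\bigr| \le \frac{\gamma}{k}\Bigl(|n_i - \E[n_i]| + \bigl|\tfrac{\n^T\e}{2k} - \E\tfrac{\n^T\e}{2k}\bigr|\Bigr) \le 12\gamma\max\Bigl\{\Bigl(\frac{q(1-q)\log k}{k}\Bigr)^{1/2},\, \frac{\log k}{k}\Bigr\},
\end{equation*}
where the constant $12$ absorbs the two contributions of $6$ each (using $k-1 \le k$ and monotonicity of the $\max$ expression in $k$). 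Combining with $\E[y_i]\ge\gamma\epsilon/2$ yields \eqref{eq: y bound}, and a union bound over the $k$ nodes $i\in V^*$ (losing only a factor $k$, leaving failure probability $O(k^{-11})$) gives the conclusion ``with high probability'' in the sense defined after Theorem~\ref{thm: dks recovery}.

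The main obstacle is bookkeeping rather than conceptual: I must verify that both applications of Lemma~\ref{lem: Bernstein bound} are legitimate (in particular that the hypotheses \eqref{eqn: pq assumptions}–\eqref{eqn: gap ass1} guarantee $k$ is large enough that the $\log$ factors and the probability bounds behave, and that replacing $k-1$ by $k$ inside logarithms and square roots only costs absolute constants), and that the two deviation terms combine into a single $\max$ expression with the stated constant $12$ — the slightly awkward point being that the $\n^T\e/(2k)$ term carries a $1/k$ that must be tracked carefully so that $(q(1-q)k^2\log k)^{1/2}/k = (q(1-q)\log k)^{1/2}$, which is actually \emph{larger} than $(q(1-q)\log k/k)^{1/2}$ by a factor $\sqrt{k}$; resolving this requires either a sharper bound on the edge-count fluctuation or the observation that $n_i$ and $\n^T\e$ are correlated (indeed $n_i$ is one of the terms in $\n^T\e$), so one should instead bound $n_i - \frac{\n^T\e}{2k}$ as a single centered quantity, writing it as $\frac{2k-1}{2k}(n_i - \E n_i) - \frac{1}{2k}\sum_{j\ne i}(n_j - \E n_j)$ and controlling the second sum — a sum of $\binom{k-1}{2} + $ shared-edge terms — by Bernstein directly; this gives the needed $(q(1-q)\log k/k)^{1/2}$ scaling without the spurious $\sqrt{k}$. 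I would present the argument via this centered decomposition to get the clean constant in \eqref{eq: y bound}.
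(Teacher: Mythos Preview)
Your approach is exactly the paper's, and the displayed bound
\[
|y_i - \E[y_i]| \le \frac{\gamma}{k}\Bigl(|n_i - \E n_i| + \Bigl|\tfrac{\n^T\e}{2k} - \E\tfrac{\n^T\e}{2k}\Bigr|\Bigr) \le 12\gamma\max\Bigl\{\Bigl(\tfrac{q(1-q)\log k}{k}\Bigr)^{1/2},\, \tfrac{\log k}{k}\Bigr\}
\]
is already correct as written. The ``obstacle'' you raise in the final paragraph is a phantom caused by dropping a factor of $1/k$. The quantity $(q(1-q)k^2\log k)^{1/2}/k = (q(1-q)\log k)^{1/2}$ is the deviation of $\n^T\e/(2k)$, not of $y_i$; to pass to $y_i$ you must still multiply by the prefactor $\gamma/k$ from \eqref{eq: y formula}. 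That extra $1/k$ gives a contribution of order $\gamma(q(1-q)\log k)^{1/2}/k$, which is \emph{smaller} than the target $\gamma(q(1-q)\log k/k)^{1/2}$ by a factor $\sqrt{k}$, not larger. Concretely, the $\n^T\e$ term contributes
\[
\frac{\gamma}{k}\Bigl|\tfrac{\n^T\e}{2k} - \E\tfrac{\n^T\e}{2k}\Bigr| \le \frac{6\gamma}{k}\max\bigl\{(q(1-q)\log k)^{1/2},\, 2(\log k)/k\bigr\},
\]
while the $n_i$ term contributes $\frac{6\gamma}{k}\max\{(q(1-q)k\log k)^{1/2},\log k\}$; the latter dominates, and $\sqrt{k}+1 \le 2\sqrt{k}$ for large $k$ produces the constant $12$. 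The centered decomposition of $n_i - \n^T\e/(2k)$ you propose is therefore unnecessary --- the plain triangle-inequality argument, which is precisely what the paper does, already delivers \eqref{eq: y bound}.
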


\begin{proof}
%% Tail bound on n.
Each entry  of $\n$ corresponds to $k-1$ independent Bernoulli trials, each with probability of success $1-q$.
Applying Lemma~\ref{lem: Bernstein bound} and the union bound shows that
\begin{equation} \label{eq: ni bounds}
	| n_i - (1-q)(k-1) | \le 6 \max\bra{ \bbra{ q(1-q) k \log k }^{1/2}, \log k }
\end{equation}
for all $i \in V^*$
with high probability.
%% Tail bound on n^T e.
On the other hand, $\n^T \e = 2|E(G(V^*))|$ because each entry of $\n$ is equal to the degree in the subgraph induced by $V^*$ of the corresponding node.
Therefore $\n^T \e$ is a binomially distributed random variable corresponding to ${k \choose 2}$ independent Bernoulli trials, each with probability of success $1-q$.
As before, Lemma~\ref{lem: Bernstein bound} implies that
\begin{equation}	\label{eq: ne bound}
	| \n^T \e - \E[\n^T \e] | \le %12  \max\bra{ \rbra{q (1-q) \rbra{\frac{k^2}{2}}\log (k^2)}^{1/2}, \log (k^2) } = 
		12 \max\bra{ k \bbra{ q(1-q)  \log k}^{1/2}, \;2 \log k }
\end{equation}
with high probability.
Substituting \eqref{eq: ni bounds} and \eqref{eq: ne bound} into the left-hand side of \eqref{eqn: yinf error bound} and applying the triangle inequality shows that
\begin{equation}	\label{eq: y inf bound}
	\|\y - \E[\y] \|_\infty %\le \frac{\gamma}{k} \rbra{ 6 \sqrt{q(1-q) k \log k} + \frac{1}{2k} \rbra{ 12 k \sqrt{q(1-q) \log k}} } 
	\le \frac{12 \gamma}{k}  \max\bra{ \rbra{  q(1-q) k\log k}^{1/2}, \; \log k }
\end{equation}
for sufficiently large $k$ with high probability.
Subtracting the right-hand side of \eqref{eq: y inf bound} from $E[y_i] \ge \gamma \epsilon/2$ for each $i \in V^*$ completes the proof.
\qed
\end{proof}

%% Nonnegativity for our choice of epsilon.
In Section~\ref{sec: F bound}, we will choose $\epsilon = (1-p-q)/3$ to ensure that $\|F\|_\infty \le 1$ with high probability.
Substituting this choice of $\epsilon$ in the right-hand side of \eqref{eq: y bound} yields
$$
	\min_{i\in V^*} y_i \ge\frac{\gamma}{6} \rbra{ (1-p-q) - \frac{72}{k} \max\bra{ k \bbra{ q(1-q)  \log k}^{1/2}, \;2 \log k } }
$$	
with high probability.
%This lower bound on the entries of $\y$ is nonnegative with high probability (for all choices of nonnegative regularization parameter $\gamma$) provided
%$$
%	(1-p-q)^2 k \ge 42^2 q(1-q) \log k.
%$$	
Therefore, the entries of the multiplier $M$ are  nonnegative w.h.p.~if $p,q,$ and $k$ satisfy \eqref{eqn: gap ass1}.

\subsection{A Bound on $\|F\|_\infty$}
\label{sec: F bound}
% Why we only need to look at nonclique blocks.
We next establish that $\|F\|_\infty < 1$ w.h.p.~under the assumptions of Theorem~\ref{thm: dks recovery}.
Recall that all diagonal entries, entries corresponding to edges in $G$, and entries indexed by $V^*\times V^*$ of $F$ are chosen to be equal to $0$.
It remains to bound $|F_{ij}|$ when $ij \notin E$, and $(i,j) \in (V\times V) - (V^*\times V^*)$.

%% Case 1: i,j in V-V^*.
We first consider the case when $i,j \in V-V^*$ and $ij \notin E$. In this case, we choose $F_{ij}$ according to ($\omega_4$):
%$$
%	|F_{ij}| = \frac{\lambda}{\gamma} \rbra{ \frac{1}{1-p} }.
%$$
$F_{ij} = -\lambda/(\gamma(1-p))$.
Substituting $\lambda = \gamma( \epsilon + q ) + 1/k$, we have $|F_{ij} | \le 1$ if and only if
\begin{equation}	\label{eq: F bound, case 4}
	\frac{1}{\gamma k} + \epsilon + p + q \le 1.
\end{equation}
Taking $\epsilon = (1 - p - q)/3$ and $\gamma \ge 1/(\epsilon k)$ ensures that \eqref{eq: F bound, case 4} 
is satisfied in this case.

%% Case 2: i in V, j in V-V^*.
We next consider  $i \in V^*$, $j\in V-V^*$ such that $ij \notin E$. The final case, $i\in V-V^*$, $j\in V^*$, $ij \notin E$, follows immediately
by symmetry. In this case, we take
%$$
%	F_{ij} = -\frac{\lambda}{\gamma} \rbra{ \frac{k}{k-n_j} }
%$$
$ F_{ij} = - \lambda k/(\gamma(k-n_j))$
by ($\omega_5$).
Clearly, $|F_{ij}| \le 1$ if and only if
\begin{equation}	\label{eq: F bound, case 5}
	\frac{1}{\gamma k} + \epsilon + q + \frac{n_j}{k} \le 1.
\end{equation}
Applying Lemma~\ref{lem: Bernstein bound}  and the union bound over all $j \in V-V^*$
shows that
$$
	|n_j - p k| \le  6 \max \bra{ \rbra{p(1-p) k \log k}^{1/2}, \log k }
$$
for all $j \in V-V^*$
with high probability.
Thus, the left-hand side of \eqref{eq: F bound, case 5}
is bounded above by
$$
	\frac{1}{\gamma k} + \epsilon + q + p + \frac{6}{k} \max \bra{ \rbra{p(1-p) k \log k}^{1/2}, \log k }
$$
with high probability, which is bounded above by $1$ for sufficiently large $k$ for our choice
of $\epsilon$ and $\gamma$.
Therefore, our choice of $F$ satisfies $\|F\|_\infty < 1$ with high probability. % for this choice of $\lambda, M$ and $\gamma$.

a

\subsection{A Bound on $\|W\|$}
\label{sec: W bound}
% Goal of the section.
We complete the proof by establishing that $\|W\|$ is bounded above w.h.p.~by a multiple of $\sqrt{N \log N}/k$ for $\gamma$, $\lambda$, and $M$  chosen as in Sections~\ref{sec: mu}~and~\ref{sec: F bound}.
Specifically, we have the following bound on $\|W\|$.

% Bound on W.
\begin{lemma}\label{thm: W bound}
	Suppose that $p, q$, and $k$ satisfy \eqref{eqn: pq assumptions}.
	Then % there exists absolute constant $c_r > 0$ such that
	%\begin{equation}	\label{eqn: W bound}
	\begin{align*}
		\|W \| \le& 24 \gamma \max\bra{\rbra{ q(1-q) k \log k}^{1/2}, \log^2 k} \\
					&+ 36 \lambda  \max \bra{ 1, \rbra{p(1-p) k}^{1/2}} \rbra{ \frac{N}{(1-p)^3 k^3 }}^{1/2} \log N 
	\end{align*}					
	%\end{equation}
	with high probability.
\end{lemma}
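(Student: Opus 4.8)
Decompose $W$ along the partition $V = V^*\cup(V-V^*)$ into blocks $W^{(1)}=W(V^*,V^*)$, $W^{(2)}=W(V^*,V-V^*)$, $W^{(3)}=W(V-V^*,V-V^*)$; by $(\omega_1)$--$(\omega_6)$ and the parametrization $M=\y\e^T+\e\y^T$ of Section~\ref{sec: mu}, $W$ is symmetric, so $\|W\|$ is its largest absolute eigenvalue. Because Section~\ref{sec: mu} forced $W\bv=W^T\bv=0$ and $\bv$ is supported on $V^*$ with $\|\bv\|^2=k$, we have $\|W\|=\|PWP\|$ for the projector $P=I-k^{-1}\bv\bv^T$, which is the identity on the $V-V^*$ coordinates and equals $P_{\e^\perp}:=I-k^{-1}\e\e^T$ on the $V^*$ coordinates. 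The plan is to bound $\|PWP\|\le\|PW^{(1)}P\|+2\|PW^{(2)}\|+\|W^{(3)}\|$; the first summand yields the $\gamma$-term of the statement and the last two yield the $\lambda$-term. Throughout, the operator-norm estimates on random matrices can be obtained either from a matrix Bernstein inequality or, equivalently, from a covering-net reduction to the scalar bound of Lemma~\ref{lem: Bernstein bound}.

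\noindent\textbf{Clique block.} By $(\omega_1)$, $(\omega_2)$ and $M=\y\e^T+\e\y^T$, $W^{(1)}=\tilde\lambda\,\e\e^T-\y\e^T-\e\y^T-\gamma B$, where $B\in\{0,1\}^{V^*\times V^*}$ is the adjacency matrix of the complement of $G(V^*)$ inside $V^*$ (so $B=-Y^*(V^*,V^*)$, cf.\ \eqref{eq: proposed Y}). Since $\e^T P_{\e^\perp}=0$, conjugation by $P_{\e^\perp}$ annihilates the three rank-one terms, so $\|PW^{(1)}P\|=\gamma\|P_{\e^\perp}BP_{\e^\perp}\|\le\gamma\bbra{\|B-\E B\|+q}$, using $\E B=q(\e\e^T-I)$. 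The above-diagonal entries of $B$ are i.i.d.\ Bernoulli$(q)$, hence $B-\E B$ is symmetric with independent mean-zero entries bounded by $1$ and row-variance at most $q(1-q)(k-1)$; a matrix Bernstein (Wigner-type) spectral bound then gives $\|B-\E B\|=O\bbra{\max\{(q(1-q)k\log k)^{1/2},\,\log^2 k\}}$ w.h.p., which is the first term of the statement (up to the constant $24$).

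\noindent\textbf{Diversionary blocks (the crux).} By $(\omega_3)$, $(\omega_4)$ one computes $W^{(3)}=\lambda I+\frac{\lambda}{1-p}(\hat A-\E\hat A)$, where $\hat A$ is the zero-diagonal adjacency matrix of the Erd\H{o}s--R\'enyi graph on $V-V^*$ with edge probability $p$ and $\E\hat A=p(\e\e^T-I)$, so $\|W^{(3)}\|\le\lambda+\frac{\lambda}{1-p}\|\hat A-\E\hat A\|$, and the standard spectral bound for centered (possibly sparse) random adjacency matrices --- valid under \eqref{eqn: pq assumptions} --- controls $\|\hat A-\E\hat A\|$ by $(pN)^{1/2}$ up to logarithmic factors. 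For $W^{(2)}$, by $(\omega_3)$, $(\omega_5)$ its $j$-th column ($j\in V-V^*$) is $\frac{\lambda k}{k-n_j}\bbra{a_j-\frac{n_j}{k}\e}$, where $a_j\in\{0,1\}^{V^*}$ indicates the neighbours of $j$ in $V^*$ and $n_j=\e^T a_j$. I would first condition on the event --- of high probability by Lemma~\ref{lem: Bernstein bound} and a union bound over $j\in V-V^*$ --- that $|n_j-pk|\le 6\max\{(pk\log k)^{1/2},\,\log k\}$ for every $j$, so that $k-n_j=(1-p)k(1+o(1))$ and $\frac{k}{k-n_j}=\frac{1}{1-p}(1+o(1))$ uniformly. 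Writing $W^{(2)}=\frac{\lambda}{1-p}(\tilde A-\E\tilde A)+R$, with $\tilde A\in\{0,1\}^{V^*\times(V-V^*)}$ the bipartite Bernoulli$(p)$ adjacency matrix, the principal part has norm $O((pN)^{1/2})$ (up to logarithmic factors) by a rectangular random-matrix bound; the remainder $R$ splits into a piece whose $j$-th column is $a_j-p\e$ scaled by the uniformly small random factor $\propto (n_j-pk)/((1-p)^2k)$ --- again dominated via the rectangular bound times that factor --- and a rank-one piece with left factor $\e$, which is killed by the output projection $P_{\e^\perp}$ in $\|PW^{(2)}\|$. Collecting $\|W^{(3)}\|$ and $\|PW^{(2)}\|$ and substituting $n_j\approx pk$, $k-n_j\approx(1-p)k$ produces a bound of the order of the second term of the statement.

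\noindent\textbf{Assembly and main obstacle.} Summing the three bounds, choosing the absolute constants so that $24$ and $36$ absorb the implied constants, and invoking \eqref{eqn: pq assumptions} to guarantee that all invoked concentration inequalities hold simultaneously w.h.p.\ and that the lower-order remainders ($q$ in the clique block; $\lambda$, the scalar fluctuations, and $\|R\|$ in the diversionary blocks; the various logarithmic corrections) are dominated, completes the proof. The genuinely delicate step is $W^{(2)}$: its entries carry \emph{random} denominators $k-n_j$ that must be frozen by conditioning on the degree event, after which one must verify that the resulting correction terms are truly lower-order against the $(pN)^{1/2}$-size principal term --- all while tracking the dependence on $1-p$, which is allowed to tend to $0$, and on $k$ --- and one must use a matrix-concentration bound sharp enough to give $(pN)^{1/2}$ rather than $N^{1/2}$, with polynomial-in-$k$ failure probability; this is exactly where the two conditions in \eqref{eqn: pq assumptions} are used.
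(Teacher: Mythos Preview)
Your approach is correct and close in spirit to the paper's --- the same clique/non-clique split, the same matrix-Bernstein and degree-concentration inputs --- but the execution differs in two places. For the clique block you exploit $W\bv=0$ through the projector $P_{\e^\perp}=I-k^{-1}\e\e^T$, which annihilates the rank-one pieces $\tilde\lambda\,\e\e^T$ and $\y\e^T+\e\y^T$ in one stroke and leaves only $\gamma\|P_{\e^\perp}BP_{\e^\perp}\|\le\gamma\|B-\E B\|+\gamma q$; the paper instead substitutes the explicit formula~\eqref{eq: y formula} for $\y$ and writes $Q(V^*,V^*)=\gamma\sum_{i=1}^4 Q_i$ with each $Q_i$ centered and bounded separately --- your route is shorter here. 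For the non-clique part the paper does \emph{not} treat $W^{(2)}$ and $W^{(3)}$ separately: it fabricates a single $N\times N$ matrix $R_1$ with globally i.i.d.\ mean-zero entries by filling the $(V^*,V^*)$ block of $R/\lambda$ with fresh randomness, then subtracts that fill-in ($R_2$), the diagonal ($R_3$), and the Case-$(\omega_5)$/$(\omega_6)$ discrepancies ($R_4,R_5$). The key technical difference is that the paper bounds $\|R_4\|$ crudely by its Frobenius norm, $\|R_4\|_F^2=\sum_{j\in V-V^*}(n_j-pk)^2/\bigl((1-p)^2(k-n_j)\bigr)$, which after degree concentration produces \emph{exactly} the factor $\max\{1,(p(1-p)k)^{1/2}\}\,(N/((1-p)^3k))^{1/2}$ appearing in the statement; your submultiplicative bound $\|(\tilde A-\E\tilde A)\,\diag(\delta_j)\|\le\|\tilde A-\E\tilde A\|\cdot\max_j|\delta_j|$ is legitimate and often tighter, but matching the stated form uniformly across the sparse-$p$ regime does require the second half of~\eqref{eqn: pq assumptions}, as you correctly anticipate.
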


Taking $\gamma = O\bbra{ \bbra{ (1-p-q) k }^{-1}}$ and $\lambda = 1/k + \gamma ((1-p-q)/3 + q)$ shows that
$$
	\|W\|% =\O \rbra{  \frac{1}{k} \rbra{\frac{q}{(1-p-q) } + 1} \rbra{ \frac{p}{1-p}}^{1/2} \rbra{N \log N}^{1/2} }
			= O \rbra{ \max \bra{ 1, \rbra{p(1-p) k}^{1/2}} \rbra{ \frac{N}{(1-p)^3 k^3}}^{1/2} \log N   }
$$			
with high probability.
Therefore $\|W\| < 1$ w.h.p.~if $p, q,$ and $k$ satisfy the assumptions of Theorem~\ref{thm: dks recovery} for appropriate choice of constants $c_1$ and $c_3$.

%% Decomposition of W
The remainder of this section comprises a proof of Lemma~\ref{thm: W bound}.
We decompose $W$ as $W = Q + R$, where
\begin{align*}
	Q_{ij} &= \branchdef{  W_{ij}, &\mbox{if } i,j \in V^* \\ 0, &\mbox{otherwise}  }  \hspace{0.5in}
	R_{ij} = \branchdef{ 0, & \mbox{if } i,j \in V^* \\ W_{ij}, &\mbox{otherwise.} }
\end{align*}
%That is, $Q$ is the matrix  with nonzero entries corresponding to the block of $W(V^*, V^*)$ while $R$ is the matrix whose
%$(V^*, V^*)$ block is zero and all other entries are equal to the corresponding entry of $W$.
We will bound $\|Q\|$ and $\|R\|$ separately, and then apply the triangle inequality to obtain the
desired bound on $\|W\|$.
To do so, we will make repeated use of the following bound on the norm of a random symmetric matrix with
i.i.d.~mean zero entries.

\begin{lemma} \label{lem: Tropp}
	Let $A = [a_{ij}]\in \Sigma^n$ be a random symmetric matrix with i.i.d.~mean zero entries $a_{ij}$ with variance
	$\sigma^2$ and satisfying $|a_{ij}| \le B$. Then
	$
		\|A\| \le 6 \max \bra{ \sigma \sqrt{n \log n}, B \log^2 n }
	$
	with probability at least $1 - n^{-8}$.	
\end{lemma}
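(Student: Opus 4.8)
The plan is to write $A$ as a sum of independent, mean-zero, random self-adjoint matrices and apply the matrix Bernstein inequality (Tropp). Let $e_1, \dots, e_n$ denote the standard basis of $\R^n$ and decompose
$$
	A = \sum_{i \le j} X_{ij}, \qquad X_{ii} = a_{ii}\, e_i e_i^T, \quad X_{ij} = a_{ij}(e_i e_j^T + e_j e_i^T) \ \text{ for } i < j.
$$
Since the entries $a_{ij}$ with $i \le j$ are i.i.d.~and mean zero, the $X_{ij}$ are independent, self-adjoint, and satisfy $\E X_{ij} = 0$; and because $e_i e_j^T + e_j e_i^T$ has spectral norm $1$, we have the uniform bound $\|X_{ij}\| \le B$ almost surely.

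First I would compute the matrix variance. Using $\E[a_{ij}^2] = \sigma^2$ and $(e_i e_j^T + e_j e_i^T)^2 = e_i e_i^T + e_j e_j^T$ (for $i\ne j$), we get $\E[X_{ii}^2] = \sigma^2 e_i e_i^T$ and $\E[X_{ij}^2] = \sigma^2(e_i e_i^T + e_j e_j^T)$ for $i<j$. As each index belongs to exactly $n-1$ unordered pairs, $\sum_{i\le j}\E[X_{ij}^2] = \sigma^2 I + \sigma^2(n-1) I = \sigma^2 n\, I$, so the variance proxy is $v = \big\|\sum_{i\le j}\E[X_{ij}^2]\big\| = \sigma^2 n$. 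The (two-sided) matrix Bernstein inequality then gives, for all $t \ge 0$,
$$
	\Pr\big[\, \|A\| \ge t \,\big] \le 2n\exp\!\left( \frac{-t^2/2}{\sigma^2 n + Bt/3} \right).
$$

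Finally I would substitute $t = 6\max\{\sigma\sqrt{n\log n},\, B\log^2 n\}$ and split into the two cases according to which term attains the maximum. When $\sigma\sqrt{n\log n}\ge B\log^2 n$ one has $\sigma^2 n = t^2/(36\log n)$ and $Bt/3 \le 2\sigma^2 n/\log n$, so the denominator above is at most $\sigma^2 n(1 + 2/\log n) \le t^2/(18\log n)$ once $\log n$ exceeds a fixed constant; the case $B\log^2 n \ge \sigma\sqrt{n\log n}$ is symmetric, using $\sigma^2 n \le B^2\log^3 n$ and $\log^3 n \ge 2\log^2 n$. In either case the exponent is at least $9\log n$, with enough slack to absorb the leading factor $2n$, so $\Pr[\|A\|\ge t] \le n^{-8}$, as claimed.

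The whole argument is bookkeeping once the matrix Bernstein bound is available; the only delicate point is the two-case constant chase, and in particular checking that the $Bt/3$ term is dominated by the variance term $\sigma^2 n$ (which needs $n$ larger than an absolute constant, harmless for the asymptotic statements used elsewhere in the paper). If one preferred not to cite Tropp's inequality, the same estimate follows from an $\varepsilon$-net discretization of the unit sphere combined with the scalar Bernstein bound of Lemma~\ref{lem: Bernstein bound}, at the price of a longer proof.
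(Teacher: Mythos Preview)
Your proof is correct and follows essentially the same route as the paper: the same decomposition $A=\sum_{i\le j}X_{ij}$ into rank-one/two summands, the same variance computation $\sum\E[X_{ij}^2]=\sigma^2 n\,I$, the same invocation of Tropp's matrix Bernstein inequality, and the same two-case analysis with $t=6\max\{\sigma\sqrt{n\log n},\,B\log^2 n\}$. The only cosmetic difference is that you carry the prefactor $2n$ (the standard two-sided form) whereas the paper quotes it as $n$; as you note, the exponent $18\log n/(1+2/\log n)$ has enough slack beyond $9\log n$ to absorb this once $n$ exceeds an absolute constant.
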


The proof of Lemma~\ref{lem: Tropp} follows from an application of the Noncommutative Bernstein Inequality \cite[Theorem~1.4]{tropp2011user} and is included as Appendix~\ref{app: bernstein proof}.

The following lemma gives the necessary bound on $\|Q\|$.

%Bound on Q.
\begin{lemma}	\label{lem: Q bound}
	The matrix $Q$  satisfies
	 $
	 	\|Q\| \le 24 \gamma \max \{ \bbra{q(1-q)k\log k}^{1/2}, \log^2 k \}
	$
	with high probability.
\end{lemma}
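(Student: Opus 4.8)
The plan is to write $Q$ as the restriction to $V^* \times V^*$ of a random symmetric matrix whose entries are (up to centering) independent, and then apply Lemma~\ref{lem: Tropp}. Recall that for $(i,j) \in V^* \times V^*$ the entry $W_{ij}$ equals $\tilde\lambda - M_{ij}$ when $ij \in E$ or $i = j$ (case $\omega_1$) and $\tilde\lambda - \gamma - M_{ij}$ when $ij \in \tilde E$ (case $\omega_2$), where $\tilde\lambda = \lambda - 1/k$ and $M = \y\e^T + \e\y^T$. Thus, on $V^* \times V^*$,
\[
	Q_{ij} = \tilde\lambda - \gamma\,[\,ij \in \tilde E\,] - y_i - y_j .
\]
The terms $-y_i - y_j$ form a matrix of the form $-\y\e^T - \e\y^T$ restricted to $V^*$, which has rank at most $2$; I would peel this off and bound its norm directly by $2\sqrt{k}\,\|\y\|_\infty$ using the formula \eqref{eq: y formula} together with the concentration estimates \eqref{eq: ni bounds} and \eqref{eq: ne bound} already established in the proof of Lemma~\ref{lem: bound on y}. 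The constant $\tilde\lambda\e\e^T$ term also has rank one and contributes $k\tilde\lambda$ to the norm; but in fact the cleanest route is to recenter: write $Q = \bar Q + (\text{low-rank correction})$, where $\bar Q_{ij} = -\gamma([\,ij\in\tilde E\,] - q)$ for $i \neq j$ and $\bar Q_{ii} = 0$, so that $\bar Q$ is symmetric with i.i.d.\ mean-zero off-diagonal entries taking values $-\gamma(1-q)$ with probability $q$ and $\gamma q$ with probability $1-q$. The leftover pieces — the constant shift, the diagonal, and the $\y$-dependent part — all have rank $O(1)$ and small operator norm, absorbed into the stated bound for large $k$.

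For $\bar Q$ the entries have variance $\gamma^2 q(1-q)$ and are bounded by $B = \gamma\max\{q,1-q\} \le \gamma$, so Lemma~\ref{lem: Tropp} (applied with $n = k$) gives
\[
	\|\bar Q\| \le 6\max\bra{\gamma\bbra{q(1-q)k\log k}^{1/2},\ \gamma\log^2 k}
\]
with probability at least $1 - k^{-8}$. The rank-$O(1)$ corrections are each $O(\gamma\max\{(q(1-q)k\log k)^{1/2},\log k\})$ w.h.p.\ — for the $\y$-part this uses $\|\y\|_\infty \le \|\E\y\|_\infty + \|\y - \E\y\|_\infty$ and the bound \eqref{eq: y inf bound}, giving $\sqrt{k}\,\|\y\|_\infty = O(\gamma\max\{(q(1-q)k\log k)^{1/2},\log k\})$ once one checks $k\tilde\lambda = O(\gamma\cdot\text{same})$ under the assumptions \eqref{eqn: gap ass1}; for the constant and diagonal parts it is immediate. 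Combining via the triangle inequality and doubling the constant to $24$ to swallow these lower-order terms yields $\|Q\| \le 24\gamma\max\{(q(1-q)k\log k)^{1/2},\log^2 k\}$ w.h.p.

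The main obstacle is bookkeeping rather than anything deep: one must be careful that the corrections stripped off are genuinely low-rank and that their norms really are dominated by the Lemma~\ref{lem: Tropp} term under the parameter regime of Theorem~\ref{thm: dks recovery} — in particular that the term $k\tilde\lambda = k(\lambda - 1/k) = \gamma k(\epsilon + q)$ does not blow up the bound. Here the hypothesis \eqref{eqn: gap ass1}, namely $(1-p-q)k \ge 72\max\{(q(1-q)k\log k)^{1/2},\log k\}$, together with $\gamma = O(((1-p-q)k)^{-1})$ and $\epsilon = (1-p-q)/3$, is exactly what forces $\gamma k(\epsilon+q) = O(1 + \gamma q k)$ to be of the right order; so the subtlety is checking that this algebra goes through, and that the $\log^2 k$ (rather than $\log k$) in the final bound is only needed to cover the $B\log^2 n$ branch of Lemma~\ref{lem: Tropp}.
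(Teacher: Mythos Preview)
Your decomposition into the centered matrix $\bar Q$ plus low-rank corrections is natural, and the application of Lemma~\ref{lem: Tropp} to $\bar Q$ is fine. But the treatment of the low-rank pieces has a real gap, and it is precisely the ``bookkeeping'' you flag at the end that fails.

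First, a norm error: for $\y\in\R^{V^*}$ one has $\|\y\e^T\| = \|\y\|_2\sqrt{k}$, and since $\|\y\|_2\le\sqrt{k}\,\|\y\|_\infty$ (not $\le\|\y\|_\infty$), the correct upper bound for $\|\y\e^T+\e\y^T\|$ is $2k\|\y\|_\infty$, not $2\sqrt{k}\,\|\y\|_\infty$. More seriously, once you bound the constant shift and the $\y$-part \emph{separately}, each contributes a term of order $\gamma\epsilon k$: the shift after centering is $(\tilde\lambda-\gamma q)\e\e^T=\gamma\epsilon\,\e\e^T$ with norm $\gamma\epsilon k$, and $2k\|\E\y\|_\infty = \gamma(k\epsilon+q)\approx\gamma\epsilon k$. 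For constant $q\in(0,1)$ and $\epsilon=(1-p-q)/3$, this quantity is $\Theta(\gamma k)$, whereas the target $24\gamma\max\{(q(1-q)k\log k)^{1/2},\log^2 k\}$ is only $O(\gamma\sqrt{k\log k})$. So the separate bounds are too loose by a factor $\sqrt{k/\log k}$. Your appeal to \eqref{eqn: gap ass1} runs in the wrong direction: that hypothesis says $(1-p-q)k$ is \emph{at least} $72\max\{\dots\}$, so it certifies $\gamma\epsilon k$ is \emph{large}, not small.

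What the paper does---and what makes the lemma true---is substitute the explicit formula \eqref{eq: y formula} for $\y$ into $Q$. This forces the $\tilde\lambda$ dependence to cancel \emph{exactly}: the constant part of $\y\e^T+\e\y^T$ is precisely $\tilde\lambda\e\e^T-\tfrac{k-1}{k}\gamma\e\e^T$, so after the substitution only the fluctuation terms $\n-\E\n$ and $\n^T\e-\E[\n^T\e]$ survive, together with the centered $H-\tfrac{(k-1)q}{k}\e\e^T$. These four centered pieces are each $O(\gamma\max\{(q(1-q)k\log k)^{1/2},\log^2 k\})$, yielding the stated constant. Your approach can be repaired by combining the shift and the mean of $\y\e^T+\e\y^T$ \emph{before} taking norms---their difference collapses to $-\tfrac{\gamma q}{k}\e\e^T$, which is $O(\gamma)$---and then bounding only $(\y-\E\y)\e^T+\e(\y-\E\y)^T$ via \eqref{eq: y inf bound}. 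But as written, the proposal does not achieve the lemma's bound.
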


\begin{proof}
	%% Decomposition of Q.
	We have $\|Q\| = \|Q(V^*,V^*)\|$ by the block structure of $Q$.
	Let
	\begin{align*}
		Q_1 = H(V^*, V^*) - \rbra{\frac{k-1}{k}} q &\e\e^T, \;\;\;
		Q_2 = \frac{1}{k} \bbra{ \n \e^T - (1-q)(k-1) \e\e^T }, \;\;\;
		Q_3 = Q_2^T \\
		Q_4& = \frac{1}{k} \bbra{ \n^T \e - (1-q)(k-1) k },
	\end{align*}
	where $H$ is the adjacency matrix of the complement of  $G(V^*)$.
	Note that $Q(V^*, V^*) =  \sum_{i=1}^4 \gamma Q_i$. 
	We will bound  each $Q_i$ separately and then apply the triangle inequality to obtain the desired bound on $\|Q\|$.
	
	%% Q1 bound.
	We begin with $\|Q_1\|$.  Let $\tilde H \in \Sigma^{V^*}$ be the random matrix  with
	off-diagonal entries equal to the corresponding entries of $H$ and whose diagonal entries are independent Bernoulli variables, each with probability of success equal to $q$.
	Then $\E[\tilde H ] = q \e\e^T$ and $\tilde H - q\e\e^T$ is a random symmetric matrix with i.i.d.~mean zero entries with variance equal to $\sigma^2 = q(1-q)$.
	Moreover, each entry of $\tilde H - q \e\e^T$ has magnitude	bounded above by $B = \max\{q, 1-q\} \le 1$.
	Therefore, applying Lemma~\ref{lem: Tropp} shows that
	$\|\tilde H - q \e\e^T \| \le 6 \max\{\sqrt{q(1-q) k \log k}, \log^2 k \}
	$
	with high probability.
	It follows immediately that
	\begin{align}	
		\|Q_1\| &\le \|\tilde H - q \e\e^T \| + \|(q/k) \e\e^T \| + \|\Diag (\diag \tilde H) \| \notag\\
			&\le 6 \max\bra{ \rbra{q(1-q) k \log k}^{1/2}, \log^2 k }+ q + 1 \label{eq: Q1 bound}
	\end{align}
	with high probability by the triangle inequality.
	
	%% Q2 bound.
	We next bound $\|Q_2\|$ and $\|Q_3\|$. 
	By \eqref{eq: ni bounds}, we have
	$$
		\| \n - \E[\n]\|^2 \le k \| \n - \E[\n]\|_\infty^2  \le 36 \max\bra{q(1-q) k^2 \log k, k \log^2 k}
	$$
	\whp.
	It follows that
	\begin{equation}	\label{eq: Q2 bound}
		\|Q_2 \| = \|Q_3\| \le \frac{1}{k} \|\n - \E[\n]\| \|\e\| \le 6\max\bra{ \rbra{q(1-q) k \log k}^{1/2}, \log k }
	\end{equation}
	with high probability.	
	%% Q4
	Finally,
	\begin{align}	
		\|Q_4\| &\le \frac{1}{k^2} \abs{ \n^T\e - \E[\n^T\e] } \|\e\e^T\| %\notag \\
				%&= \frac{1}{k} \abs{ \n^T\e - (1-q)(k-1) k }  
				\le 12 \max\bra{ \rbra{q(1-q) \log k}^{1/2},  2\log k/k }	\label{eq: Q4 bound}
	\end{align}
	with high probability, where the last inequality follows from \eqref{eq: ne bound}.
	%% Put everything together.
	Combining \eqref{eq: Q1 bound}, \eqref{eq: Q2 bound}, and \eqref{eq: Q4 bound} and applying the union bound we have
	$
		\|Q\|\le 24 \gamma \max\{ \rbra{q(1-q) k \log k}^{1/2}, \log^2 k \}
	$
	with high probability.
	\qed
\end{proof}
	
The following lemma provides the necessary bound on $\|R\|$.
%%----------------------------------------------------------------------------------------------
% R bound.
%%----------------------------------------------------------------------------------------------
\begin{lemma}	\label{lem: R bound}
	Suppose that $p$ and $k$ satisfy \eqref{eqn: pq assumptions}.
	Then
	$$
		\|R\| \le  36 \lambda  \max \bra{ 1, \rbra{p(1-k) k}^{1/2}} \rbra{ \frac{N}{(1-p)^3 k }}^{1/2} \log N
	$$
	with high probability.
\end{lemma}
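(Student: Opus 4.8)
The plan is to exploit the block structure of $R$. Write $R = R_{11} + R_{12} + R_{12}^T$, where $R_{11}$ is the principal submatrix of $R$ indexed by $(V\setminus V^*)\times(V\setminus V^*)$ and $R_{12}$ is the block indexed by $V^*\times(V\setminus V^*)$; since the symmetric matrix obtained by placing $R_{12}$ off-diagonally has norm exactly $\|R_{12}\|$, the triangle inequality gives $\|R\|\le\|R_{11}\|+\|R_{12}\|$. I would bound the two pieces separately, using Lemmas~\ref{lem: Bernstein bound} and~\ref{lem: Tropp} together with the hypotheses \eqref{eqn: pq assumptions}, and then combine over the finitely many high-probability events by a union bound. (The treatment of $R_{11}$ parallels that of $\|Q\|$ in Lemma~\ref{lem: Q bound}.)

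For $R_{11}$: cases $(\omega_3)$ and $(\omega_4)$ give $(R_{11})_{ij}=\lambda$ if $ij\in E$ and $-\lambda p/(1-p)$ otherwise, so $(R_{11})_{ij}=\tfrac{\lambda}{1-p}(A_{ij}-p)$ for $i\ne j$, while $(R_{11})_{ii}=\lambda$. Hence $R_{11}=\tfrac{\lambda}{1-p}C+\lambda I$, where $C$ is symmetric with zero diagonal and off-diagonal entries $A_{ij}-p$. Inserting an independent centred Bernoulli$(p)$ diagonal into $C$ produces a random symmetric matrix with i.i.d.\ mean-zero entries of variance $p(1-p)$ bounded by $1$, so Lemma~\ref{lem: Tropp} with $n=|V\setminus V^*|\le N$ yields $\|C\|\le 6\max\bra{\rbra{p(1-p)N\log N}^{1/2},\log^2 N}+1$ w.h.p., and therefore $\|R_{11}\| = O\rbra{ \tfrac{\lambda}{1-p}\max\bra{\rbra{p(1-p)N\log N}^{1/2},\log^2 N} }$ w.h.p.

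For $R_{12}$: cases $(\omega_3)$ and $(\omega_5)$ give $(R_{12})_{ij}=\lambda(kA_{ij}-n_j)/(k-n_j)$ for $i\in V^*$, $j\in V\setminus V^*$, where $n_j$ is the number of neighbours of $j$ in $V^*$. Let $B$ be the $V^*\times(V\setminus V^*)$ matrix with i.i.d.\ entries $B_{ij}=A_{ij}-p$ (these edges are each present independently with probability $p$ by $(\rho_2)$). Since $n_j-pk=(\e^T B)_j$, the numerator equals $kB_{ij}-(\e^T B)_j=\rbra{(kI-\e\e^T)B}_{ij}$, and dividing column $j$ by $k-n_j$ is right multiplication by $\tilde D^{-1}$ with $\tilde D=\Diag\rbra{(k-n_j)_{j\notin V^*}}$; hence $R_{12}=\lambda(kI-\e\e^T)B\,\tilde D^{-1}$ and $\|R_{12}\|\le\frac{\lambda k\,\|B\|}{\min_{j\notin V^*}(k-n_j)}$, using $\|kI-\e\e^T\|=k$. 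Now Lemma~\ref{lem: Bernstein bound} with a union bound over $j\notin V^*$ gives $n_j\le pk+6\max\bra{\rbra{p(1-p)k\log k}^{1/2},\log k}$, which with $(1-p)k\ge 8\max\{8p,1\}\log k$ from \eqref{eqn: pq assumptions} forces $\min_{j\notin V^*}(k-n_j)\ge\frac12(1-p)k$ for $k$ large; and $\|B\| = O\rbra{\max\bra{\rbra{p(1-p)N\log N}^{1/2},\log^2 N}}$ w.h.p.\ by a rectangular matrix Bernstein inequality (the same noncommutative Bernstein bound \cite{tropp2011user} underlying Lemma~\ref{lem: Tropp}, applied to the Hermitian dilation of $B$). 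Multiplying the three factors gives $\|R_{12}\| = O\rbra{ \tfrac{\lambda}{1-p}\max\bra{\rbra{p(1-p)N\log N}^{1/2},\log^2 N} }$ w.h.p.

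Finally, I would add the two estimates, rewrite $\sqrt{p(1-p)N\log N}/(1-p)=\sqrt{pN\log N/(1-p)}$, and use both parts of \eqref{eqn: pq assumptions} (which give $\sqrt{pN}\ge(1-p)\log N$ and $(1-p)k=\Omega(\log k)$) to compare with the stated right-hand side in the two regimes $p(1-p)k\ge1$ and $p(1-p)k<1$; absorbing absolute constants then yields $\|R\|\le 36\lambda\max\bra{1,\rbra{p(1-p)k}^{1/2}}\rbra{N/((1-p)^3 k)}^{1/2}\log N$ w.h.p. The main obstacle is the bound on $\|R_{12}\|$: one must keep the random matrix $B$ isolated so that matrix Bernstein supplies the $N^{1/2}$ scaling (a crude Schur/row-sum estimate would lose a factor of order $N^{1/2}$), while the ``conditioning'' factors $\|kI-\e\e^T\|=k$ and $\|\tilde D^{-1}\|$ contribute only $k$ and $1/((1-p)k)$ --- and, relatedly, one must ensure the random denominators $k-n_j$ are simultaneously bounded away from $0$ over all $j\notin V^*$, which is precisely where $(1-p)k=\Omega(\log k)$ enters.
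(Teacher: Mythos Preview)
Your argument is correct, but it takes a genuinely different route from the paper's. The paper writes $R=\lambda(R_1+R_2+R_3+R_4+R_5)$: it first builds a \emph{single} $N\times N$ random symmetric matrix $R_1$ with i.i.d.\ centred entries taking values $1$ and $-p/(1-p)$ (inserting phantom i.i.d.\ entries in the $V^*\times V^*$ block), applies Lemma~\ref{lem: Tropp} to it, and then subtracts off small correction matrices $R_2$ (for the planted block) and $R_3$ (for the diagonal). The column-dependent entries from cases $(\omega_5)$--$(\omega_6)$ are handled by a separate correction $R_4$ with $[R_4]_{ij}=p/(1-p)-n_j/(k-n_j)$ on non-edges, which the paper bounds crudely via $\|R_4\|\le\|R_4\|_F$ and the scalar concentration $|n_j-pk|\le 6\max\{\sqrt{p(1-p)k\log k},\log k\}$. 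Your approach instead keeps the natural block structure, and your key new idea is the exact factorisation $R_{12}=\lambda(kI-\e\e^T)B\,\tilde D^{-1}$, which isolates the randomness in the rectangular i.i.d.\ matrix $B$ and lets matrix Bernstein deliver the $\sqrt{N}$ scaling directly. This is cleaner (two pieces instead of five) and avoids the Frobenius over-count, at the modest cost of needing the rectangular/Hermitian-dilation form of the noncommutative Bernstein inequality rather than only Lemma~\ref{lem: Tropp} as stated. The paper's version, in turn, is slightly more self-contained: every random-matrix estimate reduces to Lemma~\ref{lem: Tropp} for symmetric matrices, and the $R_4$ step is elementary. Both routes give the same order and feed into the final comparison with the target bound in the same way.
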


%%----------------------------------------------------------------------------------------------
% R bound proof.
%%----------------------------------------------------------------------------------------------
\begin{proof}
	We decompose $R$ as in the proof of Theorem~7 in \cite{ames2011nuclear}.
	Specifically, we let $R = \lambda(R_1 + R_2 + R_3 + R_4 + R_5)$ as follows.
	
	% R1.
	We first define $R_1$ by considering the following cases. In Case $(\omega_3)$ we take $[R_1]_{ij} = W_{ij}$.
	In Cases $(\omega_4), (\omega_5),$ and $(\omega_6)$ we take $[R_1]_{ij} = -p/(1-p)$.
	Finally, for all $(i,j) \in V^*\times V^*$ we take $[R_1]_{ij}$ to be a random variable sampled independently from the distribution
	$$
		[R_1]_{ij} = \branchdef{ 1, &\mbox{with probability } p, \\ -p/(1-p), &\mbox{with probability } 1-p. }
	$$	
	By construction, the entries of $R_1$ are i.i.d.~random variables taking value $1$ with probability $p$ and value $-p/(1-p)$ otherwise.
	Applying Lemma~\ref{lem: Tropp} shows that
	\begin{equation} \label{eq: R_1}
		\|R_1\| \le 6 \max \bra{ B \log^2 N , \rbra{ \rbra{ \frac{p}{1-p} } N \log N }^{1/2} }
	\end{equation}
	with high probability, where $B := \max\{1, p/(1-p)\}$.
	
	% R_2$.
	We next define $R_2$ to be the correction matrix for the $(V^*, V^*)$ block of $R$. That is, $R_2(V^*, V^*) = -R_1(V^*, V^*)$ and $[R_2]_{ij} = 0$ if $(i,j) \in (V\times V) - (V^*\times V^*)$.
	Then
	$$
		\|R_2\| = \|R_1(V^*, V^*) \| \le 6 \max \bra{ B \log^2 k , \rbra{ \rbra{ \frac{p}{1-p} } k \log k }^{1/2} }
	$$
	with high probability by Lemma~\ref{lem: Tropp}.
	% R_3.
	We define $R_3$ to be the correction matrix for diagonal entries of $R_1$:  $\lambda [R_3]_{ii} =  R_{ii} - \lambda [R_1]_{ii}$ for all $i \in V^*$.		
	By construction $R_3$ is a diagonal matrix with diagonal entries taking value either $0$ or $1/(1-p)$.  Therefore
	$
		\|R_3\| \le {1}/{1-p}.
	$
	
	% R_4 and R_5
	Finally, we define $R_4$ and $R_5$ to be the correction matrices for Cases $(\omega_5)$ and $(\omega_6)$ respectively. 
	That is, we take $[R_4]_{ij} = p/(1-p) - n_j/(k-n_j)$ for all $i \in V^*$, $j\in V- V^*$ such that $ij \notin E$ and is equal to $0$ otherwise,
	and take	$R_5 = R_4^T$ by symmetry .
	Note that
	\begin{align*}
		\|R_4\|^2 &\le \|R_4\|^2_F = \sum_{j \in V-V^*} (k-n_j) \rbra{ \frac{ pk - n_j}{(1-p)(k - n_j)}}^2 = \sum_{j \in V-V^*} \frac{(n_j - pk)^2}{(1-p)^2(k-n_j)}.
	\end{align*}	
	By Lemma~\ref{lem: Bernstein bound},
	we have
	$
		|n_j - pk| \le 6  \max\{\sqrt{p(1-p) k \log k}, \log k \} 
	$
	with high probability. 
	Therefore,
	\begin{align*}
		\|R_4\|^2 &\le \frac{36 (N-k) \max\{{p(1-p) k \log k}, \log^2 k\} }{(1-p)^2 \rbra{ (1-p)k - 6\max\{\sqrt{p(1-p) k \log k}, \log k\}} }  \\
			&\le  \rbra{\frac{144 N}{(1-p)^3 k} } \max \{ p (1-p) k \log k, \log^2 k \}
	\end{align*}
	with high probability, where the last inequality follows from \eqref{eqn: pq assumptions},
	which implies that
	$$
		(1-p) k - 6 \max \{ p (1-p) k \log k, \log^2 k \} \ge \frac{1}{4} (1-p) k.
	$$
	Combining the upper bounds on each $\|R_i\|$ shows that
	$$
		\|R\| \le  36 \lambda  \max \bra{ 1, \rbra{p(1-k) k}^{1/2}} \rbra{ \frac{N}{(1-p)^3 k }}^{1/2} \log N 
	$$
	with high probability, provided $p,k,$ and $N$ satisfy \eqref{eqn: pq assumptions}. This completes the proof.	 \qed
\end{proof}	

% Dense case exposition.
The construction of the matrix $R$ is essentially identical to that of $W$ in \cite[Theorem~7]{ames2011nuclear}.
In the dense case, i.e., when $p$ is independent of $k$ and $N$, we may apply the proof of \cite[Theorem~7]{ames2011nuclear}
to show that $\|R\| = O( \sqrt{N} / k)$. This, in turn, suggests that we have exact recovery w.h.p. if $k = \Omega(\sqrt{N})$ in the dense case.

\section{Experimental Results}
\label{sec: expts}
% Preamble.
In this section, we empirically evaluate the performance of our relaxation for the planted densest $k$-subgraph problem.
Specifically, we apply our relaxation \eqref{eq: dks relaxation} to $N$-node random graphs sampled from the planted dense $k$-subgraph model for a variety of planted clique sizes $k$.

%% ADMM
For each randomly generated program input, we apply the Alternating Directions Method of Multipliers (ADMM) to solve \eqref{eq: dks relaxation}.
ADMM has recently gained popularity as an algorithmic framework for distributed convex optimization, in part, due to its being well-suited to large-scale problems arising
in machine learning and statistics. A full overview of ADMM and related methods is well beyond the scope of this paper; we direct the reader to the recent survey \cite{boyd2011distributed} and the references within for more details.
% R#2 5a.
Note that we may also solve \eqref{eq: dks relaxation} by reformulating as the semidefinite program~\eqref{eq: dks sdp primal}
 and then solve this SDP using
interior point methods when the graph $G$ is small. However, the memory requirements needed to formulate and solve the Newton system corresponding to \eqref{eq: dks relaxation} are prohibitive for graphs containing more than a few hundred nodes.
% R#2 5a.

% Our ADMM for DkS.
A specialization of ADMM to our problem is given as Algorithm~\ref{alg: ADMM}; specifically, Algorithm~\ref{alg: ADMM} is a modification of the ADMM algorithm for Robust PCA given by \cite[Example~3]{goldfarb2010fast}.
We iteratively solve the linearly constrained optimization problem
$$
	\begin{array}{rl} \min 	& \|X\|_* + \gamma \|Y\|_1  + \id_{\Omega_Q}(Q) + \id_{\Omega_W}(W) + \id_{\Omega_Z }(Z) \\
				\st 		& X + Y = Q, \;\; X = W, \;\; X = Z,
	\end{array}
$$				
where $\Omega_Q := \{Q \in \R^{V\times V}:  P_{\tilde E}(Q) = 0\}$, $\Omega_{W} := \{W\in \R^{V\times V}: \e^TW\e = k^2\}$,
and $\Omega_Z := \{Z \in \R^{V\times V}: Z_{ij} \le 1 \; \forall (i,j)\in V\times V\}$.
Here $\id_S: \R^{V\times V} \ra \{0, +\infty\}$ is the indicator function of the set $S \subseteq \R^{V\times V}$, defined by
$\id_S(X) = 0$ if $X \in S$ and $+\infty$ otherwise.
During each iteration, we
sequentially update each primal decision variable by minimizing the augmented Lagrangian
\begin{align*}
	L_\tau  = & \|X\|_*  + \gamma \|Y\|_1 +  + \id_{\Omega_Q}(Q) + \id_{\Omega_W}(W) + \id_{\Omega_Z }(Z)   
	\\ & + \tr(\lambda_Q(X + Y - Q))  + \tr(\Lambda_W (X - W) )  + \tr(\Lambda_Z ( X - Z))   \\ &+ \frac{\tau}{2} \Bbra{ \|X+Y - Q\|^2 + \|X-W\|^2 + \|X - Z\|^2 }			
\end{align*}
in Gauss-Seidel fashion with respect to each primal variable  and then updating the dual variables $\lambda_Q, \lambda_W, \Lambda_Z$ using the updated primal variables.
Here $\tau$ is a regularization parameter chosen so that $L_\tau$ is strongly convex in each primal variable.
Equivalently, we update each of $X, Y, Q, W,$ and $Z$ by evaluation of an appropriate proximity operator during each iteration.
Minimizing the augmented Lagrangian with respect to each of the artificial primal variables $Q, W$ and $Z$ is equivalent to projecting onto each of the sets $\Omega_Q$, $\Omega_W$, and $\Omega_Z$, respectively;
each of these projections can be performed analytically.
On the other hand, the subproblems for updating $X$ and $Y$ in each iteration allow closed-form solutions via the elementwise soft thresholding operator $S_\phi: \R^n \ra \R^n$ defined by
$$
	\sbra{S_\phi (\x)}_{i} = \branchdef{ x_i - \phi, &\mbox{if } x_i > \phi \\ 0, & \mbox{if } -\phi \le x_i \le \phi \\ x_i + \phi, &\mbox{if } x_i < - \phi. }
$$	
It has recently been shown that ADMM converges linearly when applied to the
minimization of convex separable functions, under mild assumptions on the program input (see \cite{luo2012linear}), and, as such, Algorithm~\ref{alg: ADMM} can be expected to converge to the optimal solution of \eqref{eq: dks relaxation};
We stop Algorithm~\ref{alg: ADMM} when the primal and dual residuals 
$$
	\|X^{(\ell)} - W^{(\ell)}\|_F, \;
	 \|X^k - Z^{(\ell)}\|_F, \;
	  \|W^{(\ell+1)}- W^{(\ell)}\|_F, \;
	   \|Z^{(\ell+1)} - Z^{(\ell)} \|_F, \;
	    \|\Lambda_Q^{(\ell+1)} - \Lambda_Q^{(\ell)}\|_F
$$
are smaller than a desired error tolerance.

% My step environment.
\begin{algorithm}[t!]
\caption{ADMM for solving \eqref{eq: dks relaxation}}
\label{alg: ADMM}
% My step environment.
\algblockdefx[Step]{Step}{EndStep} %
    [2]{{\bf Step #1:} #2} %

\begin{algorithmic}
%	\State{}
	\State {\bf Input:} $G=(V,E)$, $k \in \{1,\dots, N\}$, where $N=|V|$, and error tolerance $\epsilon$.\;
	\State {\bf Initialize:} $X^{(0)} = W^{(0)} = (k/N)^2\e\e^T$, $Y^{(0)} = -X$, $Q^{(0)} = \Lambda_Q^{(0)} =\Lambda_W^{(0)} = \Lambda_Z^{(0)} =0$.
%	\State{}
	\For{$i=0, 1, \dots,$ until converged} 
		% Update $P$
		\Procedure{{\bf Step 1:} {Update $Q^{(\ell+1)}$}}
			\State $Q^{(\ell+1)} = P_{\tilde E} \rbra{X^{(\ell)} + Y^{(\ell)} - \Lambda_Q^{(\ell)} }.$
		\EndProcedure
		% Update X.
		\Procedure{{\bf Step 2:} {Update $X^{(\ell+1)}$}}
			\State Let $\tilde X^{(\ell)} = Q^{(\ell+1)} + 2X^{(\ell)} - Z^{(\ell)} - W^{(\ell)} - \Lambda_W^{(\ell)}$.
			\State Take singular value decomposition $\tilde X^{(\ell)} = U \rbra{\Diag \x  }V^T$.
			\State Apply soft thresholding: 	$
				X^{(\ell+1)} = U \rbra{\Diag S_\tau(\x)} V^T
			$
		\EndProcedure
		% Update Y.
		\Procedure{{\bf Step {3}:} {Update $Y^{(\ell+1)}$}}
			\State $ Y^{(\ell+1)} = S_{\tau \gamma} \rbra{Y^{(\ell)} - \tau Q^{(\ell+1)} }$.
		\EndProcedure
		%Update W.
		\Procedure{{\bf Step {4}:} {Update $W^{(\ell+1)}$}}
			\State Let $\tilde W^{(\ell)} = X^{(\ell+1)} - \Lambda_W^{(\ell)}$.
			\State Let $\beta_k = \rbra{ k^2 - \e^T \tilde W^{(\ell)} \e}/N^2$.
			\State Update $W^{(\ell+1)} = \tilde W^{(\ell)} + \beta_k \e\e^T$.
		\EndProcedure
		% Update Z.
		\Procedure{{\bf Step {5}: {Update $Z^{(\ell+1)}$}}}
			\State Let $\tilde Z^{(\ell)} = X^{(\ell+1)} - \Lambda_Z^{(\ell)}$.
			\State For each $i,j \in V$: $Z^{(\ell+1)}_{ij} = \min \{\max\{\tilde Z^{(\ell)}_{ij}, 0 \}, 1\}$
		\EndProcedure
		% Update Dual Variables.
		\Procedure{{\bf Step {6}:} {Update dual variables}}
			\State $\Lambda_Z^{(\ell+1)} = \Lambda_Z^{(\ell)} - \rbra{ X^{(\ell+1)} - Z^{(\ell+1)} }$.
			\State $\Lambda_W^{(\ell+1)} = \Lambda_W^{(\ell)} - \rbra{X^{(\ell+1)} - W^{(\ell+1)}}$.
			\State $\Lambda_P^{(\ell+1)} = P_{V\times V - \tilde E} \rbra{\Lambda_P^{(\ell)} - \rbra{X^{(\ell+1)} + Y^{(\ell+1)} } } $
		\EndProcedure
		% Check convergence.		
		\Procedure{{\bf Step {7}:} {Check convergence}}
			\State $r_p = \max \{\|X^{(\ell)} - W^{(\ell)}\|_F, \|X^k - Z^{(\ell)}\|_F\}$
			\State $r_d = \max\{  \|W^{(\ell+1)}- W^{(\ell)}\|_F, \|Z^{(\ell+1)} - Z^{(\ell)} \|_F, \|\Lambda_Q^{(\ell+1)} - \Lambda_Q^{(\ell)}\|\}$.
			\If{$\max\{r_p, r_d\} < \epsilon$}
				\State {\bf Stop:} algorithm converged.
			\EndIf	
		\EndProcedure
	\EndFor
\end{algorithmic}
\end{algorithm}

%% DESCRIPTION OF OUR EXPERIMENTS.
We evaluate the performance of our algorithm for a variety of random program inputs.
We generate random $N$-node graph $G$ constructed according to $(\rho_1)$ and $(\rho_2)$ for $q =0.25$ and various clique sizes $k \in (0, N)$  and edge addition probabilities $p \in [0, 1- q)$.
Each graph $G$ is represented by a random symmetric binary matrix $A$ with entries in the $(1:k) \times (1:k)$ block set equal to 1 with probability $1-q = 0.75$ independently and
remaining entries set independently equal to $1$ with probability $p$. 
For each graph $G$, we call Algorithm~\ref{alg: ADMM} to obtain solution $(X^*, Y^*)$;
regularization parameter $\gamma = 4/((1-p-q)k)$, augmented Lagrangian parameter
$\tau = 0.35$, and stopping tolerance $\epsilon = 10^{-4}$ is used in each call to Algorithm~\ref{alg: ADMM}.
We declare the planted dense $k$-subgraph to be recovered if $\|X^* - X_0\|_F/\|X_0\|_F < 10^{-3}$, where $X_0 = \bv\bv^T$ and $\bv$ is the characteristic vector of the planted $k$-subgraph.
The experiment was repeated 10 times for each value of $p$ and $k$ for $N = 250$ and $N=500$.
The empirical probability of recovery of the planted $k$-clique is plotted in Figure~\ref{fig: expts}. 
The observed performance of our heuristic closely matches that predicted by Theorem~\ref{thm: dks recovery}, with sharp transition to perfect recovery as $k$ increases past a threshold depending on $p$ and $N$.
However, our simulation results suggest that the constants governing exact recovery in Theorem~\ref{thm: dks recovery} may be overly conservative;
we have perfect recovery for smaller choices of $k$ than those predicted by Theorem~\ref{thm: dks recovery} for almost all choices of $p$.

% Figures.
\begin{figure}[t!]
	\caption{Simulation results for $N$-node graphs with planted dense $k$-subgraph
	Each entry gives the average number of recoveries of the planted subgraph per set of $10$ trials for the corresponding
	choice of $k$ and probability of adding noise edges $p$.
	Fixed probability of deleting clique edge $q=0.25$ was used in each trial. A higher rate of recovery is indicated by lighter
	colours. The graphs of the functions
	$f(p,q, N) = \sqrt{p N} \log N / ( {4(1-p-q)(1-p)} )$ and	
	$g(p,q, N) = ( {\sqrt{N} \log N}/({4(1-p-q)(1-p)^{3/2}} ))^{2/3},
	$
	are plotted as the solid and dashed lines, respectively, and
	approximate the theoretical thresholds for exact recovery given by \eqref{eqn: gap ass2} (with the estimate 
	of the scaling constant $c_1 \approx 1/4$); we should expect perfect recovery for all $k$ to the right of both curves.
	}
	\label{fig: expts}
	\centering
	\subfloat[{$N=250$}]{\includegraphics[width=0.5\textwidth]{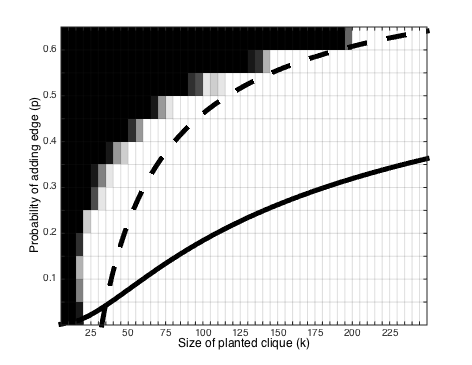} }  
	\subfloat[$N=500$]{\includegraphics[width=0.5\textwidth]{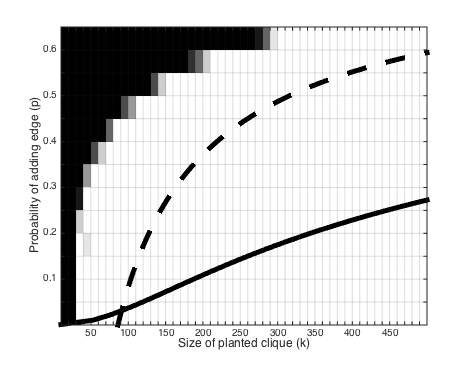}} 
\end{figure}

%% Bipartite case.
We repeated these experiments for bipartite graphs. 
Specifically, we generated random $(M,N)$-node bipartite graphs $G$ containing 
planted $(k_1, k_2)$-biclique with $q = 0.25$ for a variety of biclique sizes $(k_1, k_2)$ and $p$.
We call Algorithm~\ref{alg: ADMM} (with small modifications to address the lack of symmetry in $G$) to solve 
\eqref{eq: dkb relaxation} for each graph $G$; in particular, we set $X^{(0)} = W^{(0)} = k_1 k_2/(MN)$, $\beta_k = \big( (k_1 k_2) - \e^T \tilde W^{(\ell)} \e \big)/(MN)$,
and leave the rest of the algorithm unaltered.
We use the parameters $\gamma  = 4/((1-p-q)\sqrt{k_1 k_2})$, $\tau = 0.35$, and $\epsilon = 10^{-4}$
in each trial.
The obtained solutions were compared to the planted solutions as before to obtain the empirical probability of recovery
of the planted $(k_1,k_2)$-biclique over $10$ trials for each choice of $p$ and $(k_1,k_2)$. The experiment
was performed for the two graph sizes $(M,N) = (150, 225)$ and $(M,N) = (300, 450)$;
we choose $k_2 =  (3/2) k_1$, with $k_1 \in \{10, 20, \dots, 130, 140\}$ 
when $(M,N) = (150,225)$
and $k_1 \in \{20, 40, 60, \dots, 260, 280\}$ when $(M,N) = (300, 450)$.
As before, we observe a sharp transition to perfect recovery as $k_1$ increases past
some threshold depending on $p$, $M$, and $N$.
Again, it seems as though the predicted threshold may be overly conservative when compared to that observed empirically.

% Figures.
\begin{figure}[t!]
	\caption{Simulation results for $(M,N)$-node graphs with planted dense $(k_1, k_2)$-subgraph
	Each entry gives the average number of recoveries of the planted subgraph per set of $10$ trials for the corresponding
	choice of $k_1$ and probability of adding noise edges $p$.
	Fixed probability of deleting clique edge $q=0.25$ was used in each trial. 
	The graphs of the functions
	$f(p,q, N) = \sqrt{p N} \log N / ( {10(1-p-q)(1-p)} )$ and	
	$g(p,q, N) = ( {\sqrt{N} \log N}/({10(1-p-q)(1-p)^{3/2}} ))^{2/3},
	$
	are plotted as the solid and dashed lines, respectively, and
	approximate the theoretical thresholds for exact recovery given by \eqref{eqn: gap ass2} (with the estimate 
	of the scaling constant $c_1 \approx 1/10$); we should expect perfect recovery for all $k_1$ to the right of both curves.
	}
	\label{fig: expts}
	\centering
	\subfloat[{$(M,N)=(150, 225)$}]{\includegraphics[width=0.5\textwidth]{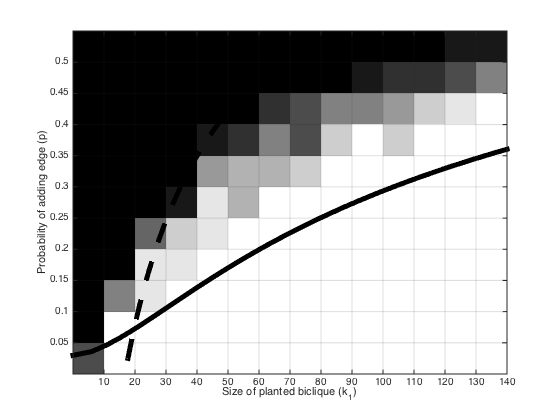} }  
	\subfloat[$(M,N)=(300, 450)$]{\includegraphics[width=0.5\textwidth]{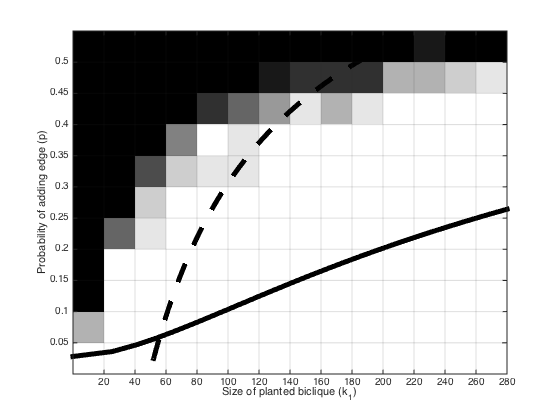}} 
\end{figure}

% Conclusions.
%----------------------------------------------------------------
% Conclusions
\section{Conclusions}
%----------------------------------------------------------------
We have considered a convex optimization heuristic for identifying the densest $k$-node subgraph of a given graph,
with novel recovery properties. In particular, we have identified tradeoffs between the size and density of a planted subgraph ensuring that 
this subgraph can be recovered from the unique optimal solution of the convex program \eqref{eq: dks relaxation}.
Moreover, we establish analogous results for the identification of the densest  bipartite $(k_1, k_2)$-subgraph in a bipartite graph.
In each case, the relaxation relies on the decomposition of the adjacency matrices of candidate subgraphs as the sum of a  dense and sparse matrix, and is closely related to recent results regarding robust principal component analysis.

These results suggest several possible avenues for future research. First, although our recovery guarantees match those previously identified in the literature, these bounds may not be the best possible.
Rohe et al. \cite{rohe2012highest} recently established that  an $N$-node random graph sampled from the Stochastic Blockmodel can be partitioned into dense subgraphs of size $\Omega(\log^4 N)$ using a regularized maximum likelihood estimator.
It is unclear if such a bound can be attained for our relaxation.
It would also be interesting  to see if similar recovery guarantees exist for more general graph models; for example, can we find the largest planted clique in a graph with several planted cliques of varying sizes? 
Other potential areas of future research may also involve post-processing schemes for identifying the densest $k$-subgraph in the case that the optimal solution of our relaxation does not exactly correspond to the sum of
a low-rank and sparse matrix, and if a similar relaxation approach and analysis may lead to stronger recovery 
results for other intractable combinatorial problems, such
as the planted $k$-disjoint-clique \cite{ames2010convex} and clustering \cite{ames2012guaranteed} problems

%----------------------------------------------------------------
% Acknowledgements.
\section{Acknowledgments}
%----------------------------------------------------------------
This research was supported in part by the Institute for Mathematics and its Applications with funds provided by the National Science Foundation.
We are also grateful to Inderjit Dhillon, Stephen Vavasis, and Teng Zhang for their helpful comments and suggestions, and to Shiqian Ma for his insight and help implementing the ADMM algorithm used in Section~\ref{sec: expts}.

%----------------------------------------------------------------------
% END MATERIAL
%----------------------------------------------------------------------

%******************************************************************************
%%% Appendices.
\appendix
%\cleardoublepage

\section{Appendices}

\subsection{Proof of Theorem~\ref{thm: KKT conds}}
\label{app: KKT proof}
%% Equivalence of KKT and thm's conditions.
	The convex program \eqref{eq: dks relaxation} admits a strictly feasible solution and, hence, \eqref{eq: dks relaxation} satisfies Slater's constraint qualification (see \cite[Equation~(3.2.7)]{borwein2006convex});
	for example, $X = (k^2/N^2) \e\e^T$ (with $Y = -X$)  satisfies the box constraints with strict inequality
	when $k < N$.
	Therefore, the Karush-Kuhn-Tucker conditions  applied to \eqref{eq: dks relaxation} state that a feasible solution $(X,Y)$ of \eqref{eq: dks relaxation}
	is optimal if and only if there exist multipliers $\lambda \in \R$, $H  \in \R^{V\times V}$, $M_1, M_2 \in \R^{V\times V}_+$, 
	and subgradients $\phi \in \partial \|X\|_*$, $\psi \in \partial\|Y\|_1$ such that
	\begin{align}
		\phi - \lambda \e\e^T + \sum_{(i,j) \in \tilde E} H_{ij} \e_i \e_j^T + M_1 - M_2 = 0 	\label{eq: orig DF X}\\
		\gamma \psi + \sum_{(i,j) \in \tilde E}  H_{ij} \e_i \e_j^T = 0  \label{eq: orig DF Y}\\
		[M_1]_{ij} (X_{ij} - 1) = 0 \;\;\forall\, i,j \in V \label{eq: orig CS} \\
		[M_2]_{ij} X_{ij} = 0 \;\; \forall \, i,j \in V \label{eq: orig nonneg CS}
	\end{align} 
	Taking $M_2 = 0$ ensures that \eqref{eq: orig nonneg CS} is satisfied for all $X$.
	Since $\bar X_{ij} = 1$ if $(i,j) \in \bar V \times \bar V$ and is $0$ otherwise, \eqref{eq: orig CS} is equivalent to \eqref{eq: KKT CS upperbound} when $X = \bar X$.	
	It is known (see, for example, {\cite[Section 3.4]{boyd2008subgradients}}) that
	$	\partial \|\bar Y\|_1 = \{ \bar Y + F: P_\Omega(F) = 0, \;\; \|F\|_\infty \le 1 \}. $
	% R#2 2b.
	Here, $P_{\Omega}$ is the projection onto the set of matrices with support contained in $\Omega$
	defined as in \eqref{eq: proj def}.
	% R#2 2b.
	We can substitute $\psi = \sign(\bar Y) + F$ in \eqref{eq: orig DF Y}
	for some matrix $F$ such that $P_\Omega (F) = 0$ and $\|F\|_\infty \le 1$.
	Moreover, since $\bar Y = 0$ for all $(i,j) \notin \tilde E$, \eqref{eq: orig DF Y} implies that $F_{ij} = 0$ for all $(i,j) \notin \tilde E$. Since the complement of
	$\tilde E$ is exactly $E \cup \{vv: v \in V\}$, this yields \eqref{eq: KKT CS edges}.	
	Similarly, the subdifferential of the nuclear norm at $\bar X$ is equal to the set
	$
		\partial \|\bar X\|_* = \{ \bar \bv \bar \bv^T / k + W : W \bar \bv = W^T \bar \bv = 0, \; \|W\| \le 1 \};
	$
	see \cite[Example 2]{watson1992characterization}.	
	Combining \eqref{eq: orig DF X} and \eqref{eq: orig DF Y} and substituting this formula for the subgradients of $\|\bar X\|_*$ into the resulting equation
	yields \eqref{eq: KKT dual feas} and \eqref{eq: KKT W def}.
	Thus, the conditions \eqref{eq: KKT dual feas}, \eqref{eq: KKT W def}, \eqref{eq: KKT F def}, \eqref{eq: KKT CS edges}, and \eqref{eq: KKT CS upperbound} are 
	exactly the Karush-Kuhn-Tucker conditions for \eqref{eq: dks relaxation} applied at $(\bar X, \bar Y)$, with the Lagrange multiplier $M_2$  taken to be $0$.
	
	%% Optimality of the combinatorial solution.
	We next show that $G(\bar V)$ has maximum density among all $k$-subgraphs of $G$ if $(\bar X, \bar Y)$ is optimal for \eqref{eq: dks relaxation}).
	Fix some subset of nodes  $\hat V \subseteq V$ of cardinality $k$.
	Let $\hat X = \hat \bv \hat\bv^T$ where $\hat\bv$ is the characteristic vector of $\hat V$ and
	let $\hat Y$ be the matrix constructed according to \eqref{eq: proposed Y} for $\hat V$.
	Note that both $\bar X$ and $\hat X$ are rank-one matrices with nonzero singular value equal to $k$.
	By the optimality of $(\bar X, \bar Y)$, we have
	$%\begin{equation}	\label{eq: sdp sol is comb sol}
		\|\hat X\|_* + \gamma \|\hat Y\|_1 = k + \gamma \|\hat Y\|_1 \ge k + \gamma \|\bar Y\|_1.
	$ %\end{equation}
	Consequently, $\|\hat Y\|_1 \ge \|\bar Y\|_1$ and $d(G(\hat V)) \le d(G(\bar V))$
	as required.
	
	%% Uniqueness of the solution.
	It remains to show that the conditions $\|W\| <1$ and $\|F\|_\infty < 1$ imply that $(\bar X, \bar Y)$ is 
	the unique optimal solution of \eqref{eq: dks relaxation}.
	The relaxation \eqref{eq: dks relaxation} can be written as the semidefinite program
	% Primal.
	\begin{equation}	\label{eq: dks sdp primal}
		\begin{array}{rl}
			\min	& \ds{\frac{1}{2} \bbra{\tr(R_1) + \tr(R_2)} + \gamma \inp{\e\e^T, Z}  }\\
			\st 	& \ds{ R := \mat{{cc} R_1 & X \\ X^T & R_2} \succeq 0 } \\
				& - Z_{ij} \le Y_{ij} \le Z_{ij}, \;\;\; \forall\, i,j \in V \\
				& \e^T X \e = k^2 \\
				& X_{ij} + Y_{ij} = 0 \;\;\; \forall\, (i,j) \in \tilde E \\
				&X_{ij} \le 1 \;\;\; \forall\, i,j \in V.
		\end{array}
	\end{equation}
	This problem is strictly feasible and, hence, strong duality holds.
	The dual of \eqref{eq: dks sdp primal} is	
	% Dual.
	\begin{equation}	\label{eq: dks sdp dual}		
		\begin{array}{rl}		
			\max	& k^2 \lambda + \tr( \e\e^T M) \\
			\st 		& Q := \mat{{cc}	I & {-\lambda \e\e^T -  \sum_{(i,j) \in \tilde E} H_{ij} + M} \\
									{-\lambda \e\e^T -  \sum_{(i,j) \in \tilde E} H_{ji} + M}  & I }  \succeq 0\\
					& \begin{array}{ll} H_{ij} - S^1_{ij} + S^2_{ij} = 0 &\;\;\; \forall \, (i,j) \in \tilde E \\
					 S^1_{ij} - S^2_{ij} = 0 &\;\;\; \forall \, (i,j) \in (V\times V) - \tilde E \\
					 S^1_{ij} + S^2_{ij} = 0 &\;\;\; \forall \, i,j \in V.	\end{array} \\
					 & M, S^1 , S^2 \in \R^{N\times N}_{+},\; H\in \R^{N\times N}, \; \lambda \in \R, .
		\end{array}
	\end{equation}	
	% Construction of primal dual pair.
	Suppose that there exists multipliers $F, W, \lambda,$ and $M$ satisfying the hypothesis of Theorem~\ref{thm: KKT conds} such that $\|W\| < 1$ and $\|F\|_\infty$.
	Note that $\bar X = \bar R_1 = \bar R_2 =  \barX$, $\bar Y$ as constructed according to \eqref{eq: proposed Y}, and $\bar Z = \sign (\bar Y)$ defines a primal feasible solution for \eqref{eq: dks sdp primal}.
	We define a dual feasible solution as follows.
	% eta, sigma choice.
	%Let $\bar D = H$.
	If $(i,j)\in \Omega$ then $F_{ij} = 0$ and we take $\bar H_{ij} = -\gamma \bar Y_{ij} = \gamma$.	
	In this case, we choose $\bar S^1_{ij} = \gamma$ and $\bar S^2_{ij} = 0$.
	If $(i,j) \in \tilde E - \Omega$, we choose $\bar H_{ij} = -\gamma F_{ij}$ and take
	$ \bar S^1_{ij} = \gamma(1 - F_{ij})/2$, $\bar S^2_{ij} = \gamma (1 + F_{ij} ) /2$.
%	$$
%		\bar S^1_{ij} = \frac{\gamma}{2} (1 - F_{ij} ), \;\;\;  \bar S^2_{ij} = \frac{\gamma}{2} (1 + F_{ij} ).
%	$$
	Finally, if $(i,j) \notin \tilde E$, we take $\bar S^1_{ij} = \bar S^2_{ij} = \gamma/2$.
	Note that, since $|F_{ij} | < 1$ for all $i,j \in V$ and $\gamma > 0$, the entries of $\bar S^1$ are strictly positive and those of $\bar S^2$ are nonnegative
	with $\bar S^2_{ij} = 0$ if and only if $Y_{ij} - Z_{ij} < 0 $.
	Therefore, the dual solution $( \bar Q, \bar H, \bar S^1, \bar S^2)$ defined by the multipliers $F, W, \lambda, M$ is feasible and satisfies complementary slackness by construction.
	Thus, $(\bar R, \bar Y, \bar Z)$ is optimal for \eqref{eq: dks sdp dual} and $( \bar Q, \bar H, \bar S^1, \bar S^2)$ is optimal for the dual problem \eqref{eq: dks sdp dual}.
	
	% Rank of Q argument.
	We next establish that $(\bar X, \bar Y)$ is the unique solution of \eqref{eq: dks relaxation}.
	By \eqref{eq: KKT dual feas} and our choice of $\bar H$,
	$$
		\bar Q = \mat{{cc} I & - W - \bar X/k \\ - W^T - \bar X/k & I}.
	$$
	Note that $\bar R \bar Q = 0$ since $W \bar X =  W^T \bar X = 0$ and $\bar X^2/k = \bar X$.
	This implies that the column space of $\bar R$ is contained in the null space of $\bar Q$. Since $\bar R$ has rank equal to $1$, $\bar Q$ has rank at most $2N - 1$.
	Moreover, $W + \bar X/k $ has maximum singular value equal to $1$ with multiplicity $1$ since $\|W\| < 1$.
	Therefore, $\bar Q$ has exactly one zero singular value, since $\omega$ is an eigenvalue of $\bar Q-I$ if and only if $\omega$  or $-\omega$ is an eigenvalue
	of $W + \bar X/k$.
	Thus $\bar Q$ has rank equal to $2N-1$.
	
	% Uniqueness.
	To see that $(\bar X, \bar  Y)$ is  the unique optimal solution of \eqref{eq: dks relaxation}, suppose on the contrary that $(\hat R, \hat Y, \hat Z)$ is also optimal for \eqref{eq: dks sdp primal}.
	In this case, $(\hat X, \hat Y)$ is optimal for \eqref{eq: dks relaxation}.
	Since $( \bar Q, \bar H, \bar S^1, \bar S^2)$ is optimal for \eqref{eq: dks sdp dual}, we have $\hat R \bar Q = 0$ by complementary slackness.
	This implies that $\hat R = t \bar R$  and  $\hat X = t \bar X$ for some scalar $t \ge 0$ by the fact that the column and row spaces of $\hat R$ lie in the null space of $\bar Q$, which is
	spanned by $[\bar\bv; \bar\bv]$.
	Moreover, $\hat Y, \hat Z, \bar H, \bar S^1, \bar S^2$ also satisfy complementary slackness. 
	In particular, $\hat Y_{ij} = - \hat Z_{ij}$ for all $ij \in \Omega$ since $\bar S^1_{ij} \neq 0$.
	On the other hand, $\bar S^1_{ij} \neq 0$, $\bar S^2_{ij} \neq 0$ for all $(i,j) \notin \Omega$ and $\hat Y_{ij} = \hat Z_{ij} = 0$ in this case.
	It follows that $\supp(\hat Y) \subseteq \supp (\bar Y) = \Omega$ and
	$
		\hat Y = - P_{\Omega} \hat X = - t P_{\Omega} \bar X
	$
	by the fact that $P_{\Omega}( \hat X + \hat Y) = 0$.
	Finally, since $(\bar X, \bar Y)$ and $(\hat X, \hat Y)$ are both optimal for \eqref{eq: dks relaxation},
	$$
		\|\bar X\|_* + \gamma \|\bar Y\|_1 = \|\hat X\|_* + \gamma \|\hat Y\|_1 =
		t ( \|\bar X\|_* + \gamma \|\bar Y \|_1) .
	$$
	Therefore $t = 1$ and $(\bar X, \bar Y)$ is the unique optimal solution for \eqref{eq: dks relaxation}.
	\qed

\subsection{Proof of Lemma~\ref{lem: Tropp}}
\label{app: bernstein proof}
In this section, we establish the concentration bound on the norm of a mean zero matrix given by Lemma~\ref{lem: Tropp}.
To do so, we will show that
Lemma~\ref{lem: Tropp} is a special case of the following bound on the largest eigenvalue of a sum of random matrices.

\begin{theorem}[{\cite[Theorem~1.4]{tropp2011user}}]
	\label{thm: matrix bernstein}
	Let $\{X_k\} \in \Sigma^d$ be a sequence of independent, random, symmetric matrices of dimension $d$ satisfying
	$
		\E[X_k] =0 \mbox{ and } \|X\| \le R,
	$
	and let $S = \sum X_k$.
	Then, for all $t \ge 0$,
	\begin{equation}	\label{eqn: matrix bernstein}
		P \rbra{ \| S \| \ge t } \le d \cdot \exp \rbra{ -t^2/2}{\tilde\sigma^2 + Rt/3} \;\;\; \mbox{where } \tilde\sigma^2 := \left\|\sum_k \E(X_k^2) \right\|.
	\end{equation}
\end{theorem}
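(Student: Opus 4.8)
The plan is to prove this via the \emph{matrix Laplace transform method}, the standard route to matrix Bernstein-type bounds, whose crux is a matrix analogue of the fact that moment generating functions of independent sums factor. First I would reduce the tail bound to an estimate on a trace exponential. Since $e^{\theta S}$ is positive definite and $\lambda_{\max}(e^{\theta S}) = e^{\theta \lambda_{\max}(S)}$, Markov's inequality applied to the bound $\tr e^{\theta S} \ge \lambda_{\max}(e^{\theta S})$ gives, for every $\theta > 0$,
\begin{equation*}
	P\rbra{ \lambda_{\max}(S) \ge t } \le e^{-\theta t}\, \E\sbra{ \tr e^{\theta S} }.
\end{equation*}
It then remains to control the \emph{matrix moment generating function} $\E \tr e^{\theta S}$ and to optimize over $\theta$.

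The central difficulty, and the step that genuinely distinguishes the matrix case from the scalar one, is that $e^{\theta(X_1+X_2)} \ne e^{\theta X_1} e^{\theta X_2}$ when the summands fail to commute, so the matrix MGF does not simply factor over the independent $X_k$. The resolution is Lieb's concavity theorem: the map $A \mapsto \tr \exp(H + \log A)$ is concave on positive definite matrices for each fixed symmetric $H$. Applying Jensen's inequality to this map, conditioning on all but the last summand, and iterating yields the subadditivity of the matrix cumulant generating function,
\begin{equation*}
	\E\sbra{ \tr e^{\theta S} } \le \tr \exp\rbra{ \sum_k \log \E\sbra{ e^{\theta X_k} } }.
\end{equation*}
I expect this to be the main obstacle: it is precisely where the noncommutativity is tamed, and it rests essentially on the deep convexity input supplied by Lieb's theorem rather than on any elementary manipulation.

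With subadditivity in hand, the remaining steps are essentially scalar. For each mean-zero summand with $\|X_k\| \le R$, the scalar inequality $e^{\theta x} \le 1 + \theta x + g(\theta) x^2$ valid for $|x| \le R$, where $g(\theta) = (e^{\theta R} - \theta R - 1)/R^2$, transfers to the Loewner order via the transfer rule for matrix functions, giving $\E e^{\theta X_k} \preceq I + g(\theta)\E[X_k^2] \preceq \exp\rbra{ g(\theta)\E[X_k^2] }$ and hence $\log \E e^{\theta X_k} \preceq g(\theta)\E[X_k^2]$. Summing these, using monotonicity of the trace exponential, and bounding $\tr \exp(\cdot) \le d\,\lambda_{\max}\exp(\cdot)$ yields $\E \tr e^{\theta S} \le d \exp\rbra{ g(\theta)\tilde\sigma^2 }$. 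Substituting back and choosing $\theta = R^{-1}\log(1 + Rt/\tilde\sigma^2)$, or more simply using the estimate $g(\theta) \le (\theta^2/2)/(1 - R\theta/3)$ on $0 < \theta < 3/R$ and optimizing, produces the stated Bernstein bound for $\lambda_{\max}(S)$. Finally, running the identical argument on $-S = \sum_k(-X_k)$, which satisfies the same hypotheses, controls $\lambda_{\min}(S)$ and hence the operator norm via $\|S\| = \max\{\lambda_{\max}(S), -\lambda_{\min}(S)\}$, completing the proof.
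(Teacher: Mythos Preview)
The paper does not prove this theorem; it is quoted verbatim as \cite[Theorem~1.4]{tropp2011user} and used as a black box in the proof of Lemma~\ref{lem: Tropp}. Your outline is a correct sketch of exactly the argument Tropp gives in that reference: the matrix Laplace transform reduction, Lieb's concavity to obtain subadditivity of the matrix cumulant generating function, the transfer of the scalar Bernstein MGF bound to the Loewner order, and the final optimization in $\theta$. There is nothing to compare against in this paper itself, and your proposal matches the cited source.
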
		
	
To see that Lemma~\ref{lem: Tropp} follows as a corollary of 	Theorem~\ref{thm: matrix bernstein}, 
let $A \in \Sigma^n$ be a random symmetric matrix with i.i.d.~mean zero entries having variance $\sigma^2$ such that $|a_{ij}| \le B$ for all $i,j$.
Let  $\{X_{ij}\}_{1\le i\le j\le n} \in \Sigma^n$ be the sequence defined by
$$
	X_{ij} = \branchdef{ a_{ij} (\e_i\e_j^T + \e_j \e_i^T), &\mbox{if } i \neq j \\
					a_{ii} \e_i\e_i^T, &\mbox{if } i= j, }
$$					
where $\e_k$ is the $k$-th standard basis vector in $\R^n$.
Note that $A = \sum X_{ij}$.
It is easy to see that $\|X_{ij}\| \le |a_{ij}| \le B$ for all $1\le i\le j\le n$.
On the other hand, 
$$
	M = \sum_{1\le i\le j\le n} \E(X_{ij}^2) = \sum_{i=1}^n \rbra{\E(a_{ii}^2) \e_i\e_i^T + \sum_{j = i+1}^n \E(a_{ij}^2) (\e_i\e_i^T + \e_j \e_j^T) }= \sigma^2 n \cdot I,
$$	
by the independence of the entries of $A$.
Therefore, $ \tilde \sigma^2 = \|M\| = \sigma^2n$.
Substituting into \eqref{eqn: matrix bernstein} shows that
\begin{equation}
	P( \|A\| \ge t) \le n \exp \rbra{ -\frac{\sigma^2 n \log n / 2} {\sigma^2 n + Bt/3}  }
\end{equation}
for all $t \ge 0$.
To complete the proof,  we take $t = 6 \max\{\sigma \sqrt{n\log n}, B \log^2 n\}$ and consider the following cases.

First, suppose that $\sigma \sqrt{n \log n} \ge B \log^2 n$. In this case, we take $t = 6 \sigma \sqrt{n\log n}$.
Let $ f(t) = (t^2/2)/(\sigma^2 n + Bt/3)$. Then, we have
\begin{align*}
	f(t) = \frac{18 \sigma^2 n\log n}{\sigma^2 n  + 2 B \sigma \sqrt{n\log n} }
		\ge \frac{18 \sigma^2 n\log n}{\sigma^2 n + 2 \sigma^2 n \log n/\log^2 n}
		= \frac{18 \log n}{1 + 2/log n} \ge 9 \log n
\end{align*}		
if $n \ge 8$ by the assumption that $B \le \sigma \sqrt{n\log n}/\log^2 n$.
On the other hand, if $B \log ^2n > \sigma\sqrt{n\log n}$ we take $t = B \log^2 n$ and
\begin{align*}
	f(t) = \frac{18 B^2 \log^4 n}{\sigma^2 n + 2 B^2 \log^2n} > \frac{18 B^2 \log^4 n}{B^2\log^2 (\log n + 2) } > 9 \log n
\end{align*}
if $n \ge 8$.
In either case,	
$
	P(\|A\| \ge t ) \le \exp(-f(t)) \le n \exp(-9\log n) = n^{-8}.
$
\qed

%******************************************************************************
% B I B L I O G R A P H Y
% -----------------------
%\bibliographystyle{plain}

% This specifies the location of the file containing the bibliographic information.  
% It assumes you're using BibTeX (if not, why not?).
% The following statement causes the title "References" to be used for the biliography section:
%\renewcommand{\bibname}{References}
%\bibliography{}   % name your BibTeX data base

% Tip 5: You can create multiple .bib files to organize your references. 
% Just list them all in the \bibliogaphy command, separated by commas (no spaces).
% Add the References to the Table of Contents

%\begingroup
%\raggedright

%\bibliographystyle{spbasic}
\bibliographystyle{spmpsci} 
\bibliography{DKS-bib}
%\endgroup
%
% The following statement causes the specified references to be added to the bibliography% even if they were not 
% cited in the text. The asterisk is a wildcard that causes all entries in the bibliographic database to be included (optional).
%\nocite{*}

%******************************************************************************

%******************************************************************************

\end{document}